\newtheorem{theorem}{Theorem}
\newtheorem{lemma}[theorem]{Lemma}
\newtheorem{corollary}[theorem]{Corollary}
\long\def\symbolfootnote[#1]#2{\begingroup\def\thefootnote{\fnsymbol{footnote}}
\footnote[#1]{#2}\endgroup}
\begin{document}

\title{On lower bounds for the matching number of subcubic graphs}
\author{P.E.~Haxell\footnote{Department of Combinatorics and
Optimization, University of Waterloo, Waterloo, Ontario, Canada N2L
3G1. pehaxell@math.uwaterloo.ca; Partially
supported by NSERC.}\ \ 
and A.D.~Scott\footnote{Mathematical Institute,
    University of Oxford, Andrew Wiles Building, Radcliffe Observatory
    Quarter, Woodstock Rd, Oxford, UK OX2 6GG. scott@maths.ox.ac.uk}}
\date{\today}

\maketitle

\begin{abstract}
We give a complete description of the set of triples
$(\alpha,\beta,\gamma)$ of real numbers with the following property. There
exists a constant $K$ such that $\alpha n_3+\beta n_2+\gamma n_1-K$ is
a lower bound for the matching number 
$\nu(G)$ of every connected subcubic graph $G$, where $n_i$ denotes
the number of vertices of degree $i$ for each $i$.
\end{abstract}

\noindent {\bf Keywords:} matching, subcubic graph, polyhedron


\section{Introduction}

A graph is said to be {\it subcubic} if its maximum degree is at most
three. In this paper we consider lower bounds for the maximum size
$\nu(G)$ of a matching in subcubic graphs $G$.

Various lower bounds on $\nu(G)$ for subcubic graphs $G$ appear in the
literature. For example, the following theorem is due to Biedl,
Demaine, Duncan, Fleischer and Kobourov~\cite{BD}. Here
$n_i$ denotes the number of vertices of degree $i$ in $G$,
and $\ell_2$ denotes the number of
end-blocks in the block-cutvertex tree of $G$.

\begin{theorem} \label{bdetc}
Let $G$ be a connected graph with $n$ vertices.
\begin{enumerate}
\item If $G$ is cubic then $\nu(G)\geq 4(n-1)/9$.
\item If $G$ is subcubic then $\nu(G)\geq n_3/2+n_2/3+n_1/2-\ell_2/3$, and $\nu(G)\ge (n-1)/3$. 
\end{enumerate}
\end{theorem}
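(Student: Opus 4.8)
The theorem packages three lower bounds — $\nu(G)\ge (n-1)/3$ and $\nu(G)\ge n_3/2+n_2/3+n_1/2-\ell_2/3$ for connected subcubic $G$, and $\nu(G)\ge 4(n-1)/9$ for connected cubic $G$ — and my plan is to prove each by induction on $|V(G)|$ using one scheme: delete a small piece of $G$ so that the induction hypothesis applies to the components of the remainder, and so that the deleted piece carries enough matching edges to close the gap. The reason this works in a subcubic graph is that deleting a bounded vertex set produces only a bounded number of components.

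\textbf{The bound $\nu(G)\ge(n-1)/3$.} I would show that every connected subcubic $G$ with $|V(G)|\ge 3$ has an edge $uw$ with $G-\{u,w\}$ having at most two components; this suffices, since if the components are $G_1,\dots,G_m$ (so $m\le 2$, $\sum_i|V(G_i)|=n-2$) then $\nu(G)\ge 1+\sum_i\nu(G_i)\ge 1+\sum_i(|V(G_i)|-1)/3=1+(n-2-m)/3\ge(n-1)/3$, the cases $n\le 2$ being immediate. To find such an edge: if $G$ has a degree-$1$ vertex $x$ with neighbour $c$, take $uw=cx$, so $G-c$ has at most three components, one of which is $\{x\}$, hence $G-\{c,x\}$ has at most two. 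Otherwise $G$ has minimum degree $\ge 2$; if $G$ is $2$-connected take any edge $uw$, and if not then (since a $K_2$ end-block would contain a degree-$1$ vertex) $G$ has an end-block $B$ that is $2$-connected with at least three vertices, and we take $uw$ to be an edge of the connected graph $B-c$, where $c$ is the cut-vertex of $B$. In both cases $u,w$ lie in a $2$-connected subgraph $B$ containing $uw$ and have all their neighbours inside $B$, so the components of $G-\{u,w\}$ correspond to those of $B-\{u,w\}$ (the rest of $G$ being attached via $c$ to one of them). Finally $B-\{u,w\}$ has at most two components: if it had three, say $C_1,C_2,C_3$, then since $B-u$ and $B-w$ are connected each $C_i$ would send an edge to both $u$ and $w$, forcing $\deg_B(u)\ge 4$ — impossible.

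\textbf{The bound $\nu(G)\ge n_3/2+n_2/3+n_1/2-\ell_2/3$.} Here I would peel an end-block $B$ with cut-vertex $c$ off the block-cutvertex tree and pass to the connected graph $G'=G-(V(B)\setminus\{c\})$, tracking how $\nu$, the counts $n_3,n_2,n_1$ (the degree of $c$ drops by $\deg_B(c)\ge1$), and $\ell_2$ (which drops by $1$, or more if $c$ stops being a cut-vertex) all change. On the matching side one uses the structure of the subcubic block $B$ — small apart from a $2$-connected core handled by Petersen-type input — to extract a matching of $B$ covering or missing $c$ as convenient, and combines it with the inductive matching of $G'$. The remaining work is a finite case analysis showing that for every type of end-block the matching gained inside $B$ offsets the resulting change in $n_3/2+n_2/3+n_1/2-\ell_2/3$; the $-\ell_2/3$ term is precisely what makes the balance work for the smallest end-blocks (pendant edges and triangles).

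\textbf{The bound $\nu(G)\ge4(n-1)/9$ for cubic $G$.} This is where I expect the genuine difficulty. By Petersen's theorem a bridgeless cubic graph has a perfect matching, so the deficiency is concentrated at bridges. Passing to the Gallai-Edmonds decomposition $V=A\cup D\cup C$, with $G[D]$ a disjoint union of factor-critical pieces $C_1,\dots,C_q$ and $n-2\nu(G)=q-|A|$, each $C_i$ sends an odd number $e(C_i,A)\ge1$ of edges into $A$, and a piece sending just one — joined to $A$ by a bridge — must, being factor-critical, odd, and cubic except at its single port of degree two, have at least five vertices. The crude bound $\sum_i e(C_i,A)\le 3|A|$ is not by itself strong enough for the constant $4/9$; the substance is an amortized count over the tree of $2$-edge-connected components of $G$, charging nine vertices to each unit of deficiency by exploiting that leaf blobs are forced to be large while interior blobs only help. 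Making this charging tight against the extremal configurations is, I believe, the main obstacle in the whole argument.
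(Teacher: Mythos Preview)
The paper does not prove Theorem~\ref{bdetc}: it is quoted from Biedl, Demaine, Duncan, Fleischer and Kobourov~\cite{BD}. What the paper does do is recover two of the three bounds as by-products of its own Theorem~\ref{main}: the cubic bound $\nu\ge 4(n-1)/9$ is the special case $n_1=n_2=0$ of inequality~\eqref{b5}, and $\nu\ge(n-1)/3$ is the convex combination $\tfrac14\cdot\eqref{b2}+\tfrac34\cdot\eqref{b5}$. The bound involving $\ell_2$ is not re-derived anywhere in the paper.

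Your argument for $\nu\ge(n-1)/3$ is complete, correct, and genuinely different from the paper's route. Finding an edge $uw$ with $G-\{u,w\}$ having at most two components, via the end-block dichotomy and the observation that a third component would force degree at least four, is a clean self-contained induction; it avoids the Gallai--Edmonds machinery that the paper uses to establish \eqref{b2} and \eqref{b5}. What you lose is the sharper information those inequalities carry; what you gain is a two-paragraph proof.

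For the other two bounds, what you have written is an outline, not a proof. In the $\ell_2$ bound you defer the entire case analysis (``the remaining work is a finite case analysis showing\ldots''); in the cubic bound you correctly locate the difficulty in the charging over the tree of $2$-edge-connected components, but you do not perform it, and getting the constant $4/9$ rather than something weaker is exactly where the work lies. The paper's path to the cubic bound --- proving the more general inequality~\eqref{b5} for all connected subcubic graphs by induction on $n$, deleting a carefully chosen vertex of the Gallai--Edmonds set $B$ and controlling the degree changes --- is close in spirit to what you sketch, but the sequence of structural lemmas (Lemmas~\ref{cons}--\ref{good} in the paper) that pin down the extremal local configurations is the substance you have not supplied.
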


They also asked whether $\nu(G)\ge (3n+n_2)/9$ for every subcubic graph.  
It will turn out below that this is not the case.

Generalisations of~\cite{BD} to regular graphs of higher degree
were given by Henning and Yeo in~\cite{HY} (see also O and
West~\cite{OW}). Lower bounds in terms of other parameters of $G$ have
been given, for example, in~\cite{OW} and~\cite{HLR}.

Our aim in this paper is to give a {\em complete} description of the set $L$ of 3-tuples of real coefficients
$(\alpha,\beta,\gamma)$ for which there exists a constant $K$ such that $\nu(G)\geq\alpha
n_3+\beta n_2+\gamma n_1-K$ for every connected subcubic graph
$G$. (Note that this is equivalent to saying $\nu(G)\geq\alpha         
n_3+\beta n_2+\gamma n_1-Kc(G)$ for every subcubic graph
$G$, where $c(G)$ denotes the number of components of $G$.) Our work
here is similar in spirit to a 
result of Chv\'atal and McDiarmid~\cite{CM}, who addressed a similar question for
cover numbers of hypergraphs in terms of their number of vertices and
number of edges. We will find, as in~\cite{CM}, that $L$ is a convex
set, but in contrast to~\cite{CM} where the number of extreme points is
infinite, in our case $L$ is a certain 3-dimensional polyhedron with a
relatively simple description.

We define the polyhedron $P\subset\mathbb{R}^3$ to be the intersection of the six
half-spaces 
\begin{align*}
 x_3&\leq4/9,\\
 x_2&\leq1/2,\\
 x_3+x_1&\leq2/3,\\
 x_3+3x_2/2&\leq1,\\
 x_3+x_2+x_1&\leq1,\\ 
 x_3+x_2/6&\leq1/2.
\end{align*}
We let $P_+$ be the intersection of $P$ with the nonnegative orthant $[0,\infty)^3$ in ${\mathbb{R}}^3$.
It is easily seen that $P$ is unbounded.  However,
it follows from the first three inequalities above that $P_+$  is a bounded subset of the nonnegative orthant.

The main aim of this paper is to prove the following theorem.

\begin{theorem}\label{pl}
$P=L$.
\end{theorem}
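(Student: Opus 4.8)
The plan is to prove $L\subseteq P$ and $P\subseteq L$ separately. For $L\subseteq P$ we argue that each of the six half-spaces defining $P$ is forced. For each defining inequality we produce a sequence $(G_m)$ of connected subcubic graphs whose degree profile $(n_3,n_2,n_1)(G_m)$ grows linearly along a fixed ray and whose matching number grows proportionally at the rate making that inequality tight in the limit; substituting into $\nu(G_m)\ge\alpha n_3+\beta n_2+\gamma n_1-K$, dividing by $m$ and letting $m\to\infty$ then forces that inequality for every $(\alpha,\beta,\gamma)\in L$. The witnesses we would use are: long cycles, for $x_2\le1/2$; the one-subdivisions of cubic graphs, in which the branch vertices form a minimum vertex cover with a matching of the same size (by a Hall argument), for $x_3+\tfrac32 x_2\le1$; complete binary trees --- or any family of trees with all degrees in $\{1,3\}$ and $\nu\sim\tfrac13 n$ --- for $x_3+x_1\le2/3$; a family of connected cubic graphs attaining $\nu\sim\tfrac49 n$ (witnessing tightness of Theorem~\ref{bdetc}(1)) for $x_3\le4/9$; a near-cubic family with a bounded positive density of degree-$2$ vertices, with $n_3:n_2=6:1$, $n_1=0$ and $\nu\sim\tfrac37 n$, for $x_3+\tfrac16 x_2\le1/2$; and, for $x_3+x_2+x_1\le1$, the connected tree built from a large independent set $S$ whose members are linked in a path through degree-$2$ connecting vertices and then padded with pendant leaves up to degree $3$, which has $n_3\sim n_2\sim n_1\sim\tfrac13 n$ and $\nu\sim\tfrac13 n$ by the Tutte--Berge formula applied to $S$. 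This last family also answers in the negative the question of~\cite{BD} whether $\nu\ge(3n+n_2)/9$.

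For $P\subseteq L$ we first record three closure properties of $L$: it is convex (take the pointwise average of two admissible bounds, keeping the larger additive constant); it is downward closed in each coordinate (shrinking a coefficient only weakens the bound); and it is closed under adding nonnegative multiples of $(-1,0,1)$, because every connected subcubic graph satisfies $n_1\le n_3+2$ (from $2|E|\ge2(|V|-1)$ and $2n_1+n_2=3|V|-2|E|$), so passing from $(\alpha,\beta,\gamma)$ to $(\alpha-t,\beta,\gamma+t)$ with $t\ge0$ costs at most $2t$ in the constant. Next, the recession cone of $P$ equals $\{d:d_3\le0,\ d_2\le0,\ d_1+d_3\le0\}=\operatorname{cone}\{(0,0,-1),(0,-1,0),(-1,0,1)\}$ --- once $d_3\le0$, $d_2\le0$ and $d_1+d_3\le0$ hold, the remaining three homogenised inequalities hold automatically --- and $P$ contains no line, so by the Minkowski--Weyl theorem $P=\operatorname{conv}(V)+\operatorname{cone}\{(0,0,-1),(0,-1,0),(-1,0,1)\}$, where $V$ is the finite vertex set of $P$. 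Together with the three closure properties this reduces $P\subseteq L$ to proving $V\subseteq L$.

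Intersecting triples of the six facet hyperplanes and keeping only the intersection points lying in $P$ shows that $V$ consists of exactly three vertices, $v_1=(\tfrac49,\tfrac13,\tfrac29)$, $v_2=(\tfrac14,\tfrac12,\tfrac14)$ and $v_3=(\tfrac7{16},\tfrac38,\tfrac3{16})$. It therefore remains to prove the three lower bounds $\nu(G)\ge\tfrac49 n_3+\tfrac13 n_2+\tfrac29 n_1-K_1$, $\nu(G)\ge\tfrac14 n_3+\tfrac12 n_2+\tfrac14 n_1-K_2$ and $\nu(G)\ge\tfrac7{16}n_3+\tfrac38 n_2+\tfrac3{16}n_1-K_3$ (i.e.\ the statements $v_1,v_2,v_3\in L$), each valid for every connected subcubic $G$ with an absolute additive constant. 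Using $2n_1+n_2=3|V|-2|E|$ these read respectively $\nu(G)\ge\tfrac19(|V|+2|E|)-K_1$, $\nu(G)\ge\tfrac14(|V|+n_2)-K_2$ and $\nu(G)\ge\tfrac18(2|V|+|E|-n_1)-K_3$. We would prove all three via the Tutte--Berge formula, by bounding the deficiency $\mathrm{def}(G)=\max_S(o(G-S)-|S|)$ from above: fix a maximising $S$, use that at most $3|S|$ edges leave $S$, that every component of $G-S$ uses at least one of them (since $G$ is connected), and that an odd component of size at least $3$ carries at least two internal edges; what remains is a small optimisation over the possible multiset of component sizes, played against $|E(G)|$ or against $n_2$, whose optimal value is exactly what each of the three displayed bounds requires.

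The main obstacle is the bound for $v_1$, namely $\nu(G)\ge\tfrac19(|V|+2|E|)-O(1)$: specialised to cubic $G$ this is exactly $\nu\ge\tfrac49(|V|-1)$, the bound of Theorem~\ref{bdetc}(1), so the argument can be no softer than that (nontrivial) result and must in addition absorb all degree-$1$ and degree-$2$ vertices uniformly. This is where we expect the deficiency analysis to be most delicate, in particular in controlling small dense components of $G-S$ and in keeping the additive constant independent of $G$; an appealing alternative is to deduce $v_1\in L$ from the cubic case by a gadget substitution at the low-degree vertices that simultaneously controls $\nu$ and $|V|+2|E|$. The bounds for $v_2$ and $v_3$, whose coefficients are more generous, should then follow from the same deficiency argument with lighter bookkeeping.
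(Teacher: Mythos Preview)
Your outline for $L\subseteq P$ is essentially the paper's: one test family per facet, with only cosmetic differences in the constructions (e.g.\ a pendant path in place of the paper's pendant cycle $G_3(t)$). For $P\subseteq L$ your reduction is genuinely cleaner than the paper's. The paper passes to $P_+=P\cap[0,\infty)^3$, lists its thirteen vertices, prunes to five by monotonicity, and then proves five inequalities (Theorem~\ref{main}). You instead compute the recession cone of $P$ directly, observe that its three extreme rays correspond exactly to the three closure properties of $L$ (downward in $x_1$, downward in $x_2$, and the $(-1,0,1)$ shift from $n_1\le n_3+2$), and apply Minkowski--Weyl to reduce to the three actual vertices of $P$, namely $(4/9,1/3,2/9)$, $(1/4,1/2,1/4)$, $(7/16,3/8,3/16)$. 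This is correct and explains why bounds \eqref{b1} and \eqref{b2} in Theorem~\ref{main} are redundant for Theorem~\ref{pl}; indeed the paper itself remarks that \eqref{b2} follows from \eqref{b5} via Lemma~\ref{lamb}(3).

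The gap is that the hard part --- proving $v_1,v_2,v_3\in L$ --- is only gestured at. Your Tutte--Berge sketch (``at most $3|S|$ edges leave $S$, each component uses one, large odd components carry internal edges'') is the right starting point, but it is essentially the same information as the Gallai--Edmonds decomposition the paper uses, and the paper's proof shows that substantial further work is needed beyond this counting. More importantly, your difficulty ordering is off: you single out $v_1=(4/9,1/3,2/9)$ as the main obstacle and expect $v_2,v_3$ to fall out ``with lighter bookkeeping'', but in the paper it is $v_3=(7/16,3/8,3/16)$ (bound~\eqref{b4}) that requires by far the most delicate argument --- several structural lemmas (Lemmas~\ref{cons}--\ref{good}) ruling out adjacent degree-$2$ vertices, controlling ``good'' degree-$3$ vertices, and a final case analysis on a $4$-cycle configuration --- whereas \eqref{b5} drops out of the basic inductive step immediately. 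A bare deficiency bound of the type you describe will not distinguish the extremal configurations for $v_3$, so until that case is actually carried out the proposal remains an outline rather than a proof.
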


We will prove that $P\subseteq L$ in Section \ref{pmain}, and $L\subseteq P$ in Section \ref{pmain2}.  

Our proof that $P\subseteq L$ will need the fact that five specific points belong to $L$.  This
is a consequence of the following stronger result, which we prove in Section \ref{main}.

\begin{theorem}\label{main} Let $G$ be a subcubic graph with $c=c(G)$
  components. Then
\begin{align}
\nu(G)&\geq n_2/2+n_1/2-c/2,\label{b1}\\
\nu(G)&\geq n_2/3+2n_1/3-c,\label{b2}\\
\nu(G)&\geq n_3/4+n_2/2+n_1/4-c/2,\label{b3}\\
\nu(G)&\geq 7n_3/16+3n_2/8+3n_1/16-c/8,\label{b4}\\
\nu(G)&\geq 4n_3/9+n_2/3+2n_1/9-c/9.\label{b5}
\end{align} 
\end{theorem}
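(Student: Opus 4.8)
The plan is to prove \eqref{b1}--\eqref{b5} by induction, maintaining all five inequalities simultaneously. First one reduces to connected $G$: since $\nu$, $n_1$, $n_2$, $n_3$ are all additive over components and the constant term contributes exactly $-K$ per component, the connected case of each bound yields the general case. So assume $G$ is connected and induct on $|V(G)|$.

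For the base case, note that if $G$ is connected, cubic and $2$-edge-connected then it has a perfect matching by Petersen's theorem, so $\nu(G)=n_3/2$; since the coefficient of $n_3$ in each of \eqref{b1}--\eqref{b5} is less than $1/2$ and $n_1=n_2=0$, every right-hand side is at most $n_3/2=\nu(G)$, so all five bounds hold. In addition one checks directly a short list of small graphs ($K_1$, $K_2$, short paths and cycles, triangles, $K_4$, $K_{3,3}$, $K_4$ minus an edge and the like) that survive the reductions below.

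For the inductive step, suppose $G$ is connected, is neither cubic-and-$2$-edge-connected nor one of the listed small graphs, and has $|V(G)|$ minimal subject to violating one of the bounds. One locates a reducible configuration and passes to a smaller graph $G'$ (or a pair $G_1,G_2$), using an elementary matching inequality --- typically $\nu(G)\ge\nu(G')$, or $\nu(G)\ge\nu(G')+1$, or $\nu(G)\ge\nu(G_1)+\nu(G_2)$ --- and then verifies, for each of the five coefficient triples, that the decrease in $\alpha n_3+\beta n_2+\gamma n_1-K\cdot c(G)$ is no larger than the corresponding matching gain, so that the inductive hypothesis for $G'$ implies the bound for $G$. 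The configurations to treat are: a vertex of degree at most one (delete a pendant edge together with its stem); a bridge (split off the two sides); a cut vertex $v$ (write $G=G_1\cup\cdots\cup G_d$ with the $G_i$ sharing only $v$, and recurse, using that a maximum matching meets $v$ in at most one of the $G_i$); a triangle through a vertex of degree two; and a vertex of degree two whose two neighbours are nonadjacent (delete it together with one neighbour, chosen adaptively). Since every connected subcubic graph outside the base list contains one of these, the induction closes once all the slack inequalities are checked. Theorem~\ref{bdetc} is also convenient for disposing of some cases quickly.

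The main obstacle is exactly this last verification, which is what forces the numbers $\tfrac12,\tfrac13,\tfrac7{16},\tfrac49,\ldots$. A vertex of degree one or two carries positive weight, so any reduction deleting it must recoup that weight through a real increase in $\nu$; suppressing a degree-two vertex --- which can leave $\nu$ unchanged, as for $C_4$ --- is therefore too weak, and one is pushed to delete such a vertex together with a neighbour, which degrades up to three further vertices and can split $G$ into as many as three pieces, inflating the $-K\cdot c(G)$ term. Because the constant $K=1$ in \eqref{b2} is large, this splitting is the binding constraint there, and one must route through the cut-vertex case rather than through a blunt deletion; similarly \eqref{b3}--\eqref{b5} are tightest when many of the affected neighbours have degree two, so that no weight is shed through degradation. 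Arranging the case analysis so that, for every configuration and every one of the five inequalities, a reduction with nonnegative slack is available --- by choosing which neighbour to delete, or which cut to take, adaptively --- is the heart of the argument; the remaining computations are the routine bookkeeping of degree changes under vertex deletion, best displayed as a table indexed by the degrees of the deleted vertices and of their neighbours.
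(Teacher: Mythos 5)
Your outline has the right general shape (induction on connected graphs, reduce to a smaller graph, check five linear inequalities against the matching gain), but it contains a genuine gap, and it is exactly at the point you yourself defer: the ``routine bookkeeping'' does not close for your list of configurations. Take bound \eqref{b4} and the configuration of a degree-2 vertex $w$ with nonadjacent degree-3 neighbours $u,v$, where the other neighbours of $u$ (and of $v$) also have degree 2 --- this occurs, for instance, throughout the graph $G_5(t)$ obtained by subdividing every edge of a cubic graph. Deleting $w$ together with $u$ gains one matching edge, but the right-hand side of \eqref{b4} drops by $7\cdot2/16+3\cdot1/8\cdot(\text{sign changes})$, which works out to $5/4+ (c'-1)/8 > 1$ even when the remainder stays connected: you lose weight $3/8+7/16$ from the deleted pair and a further $3/8-3/16$ from each of $u$'s two neighbours degrading from degree 2 to degree 1, and choosing $v$ instead of $u$ gives the same numbers. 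So the inductive hypothesis for $G'$ yields only $\nu(G)\ge 7n_3/16+3n_2/8+3n_1/16-1/8-1/4$, which is strictly weaker than \eqref{b4}. No adaptive choice among your listed reductions (pendant edges, bridges, cut vertices, triangles through degree-2 vertices, deleting a degree-2 vertex with a neighbour) repairs this, because every such reduction that forces $\nu$ to drop must delete \emph{both} endpoints of an edge, and two deleted vertices plus collateral degradation cost more than one unit of the \eqref{b4} weight.

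The missing idea is a global one. The paper invokes the Gallai--Edmonds structure theorem: if $G$ is not already covered by the perfect-matching/hypomatchable case (your Petersen base case is a special instance, handled there via $\nu\ge(n-1)/2$), there is a nonempty set $B$ of vertices $v$ with $\nu(G-v)=\nu(G)-1$, so that deleting a \emph{single} vertex (plus its degree-1 neighbours) already trades for a full matching edge; this better exchange rate disposes of \eqref{b1}, \eqref{b2}, \eqref{b5} in one computation. Even then \eqref{b3} and \eqref{b4} require several further structural lemmas (no two adjacent degree-2 vertices, every degree-2 vertex has two degree-3 neighbours, an analysis of ``good'' vertices using the hypomatchability of the components of $G[A]$ and the expansion property of $B$, culminating in a nonexistent auxiliary graph). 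Your proposal contains no substitute for this global input, and the local reductions you list provably cannot supply it, so the argument as sketched cannot be completed.
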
 

All five of these bounds are sharp: \eqref{b4} is attained
by the triangle, \eqref{b1} and \eqref{b3} by any odd cycle, and \eqref{b1}, \eqref{b2} and \eqref{b5} by
the claw $K_{1,3}$.  Furthermore, for a subcubic graph $G$, each of the bounds is sharp for $G$ if and only if it is
sharp for every component of $G$.
We will give further connected, sharp examples for \eqref{b1}, \eqref{b2}, \eqref{b3}, \eqref{b5} in Section \ref{pmain2}.
The proof of Theorem~\ref{main} is given in
Section~\ref{pmainb}, where
we will also note the following corollary concerning the constant $K$ from the definition of $L$.

\begin{corollary}\label{pmcor}
Let $(\alpha,\beta,\gamma)$ be an element of $P$.
\begin{enumerate}
\item If $\alpha\geq 0$ then  $\nu(G)\ge\alpha n_3+\beta n_2+\gamma n_1-1$ 
for every connected subcubic graph $G$.
\item If $\alpha<0$ then $\nu(G)\ge\alpha n_3+\beta n_2+\gamma n_1-(2|\alpha|+1)$
for every connected subcubic graph $G$.
\end{enumerate}
 \end{corollary}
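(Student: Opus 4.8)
The plan is to deduce both parts from Theorem~\ref{main} together with the explicit description of $P$, while keeping track of the additive constant. Throughout, $G$ denotes a connected subcubic graph, so $n_1,n_2,n_3,\nu(G)\ge 0$ and the five bounds of Theorem~\ref{main} hold with $c=1$; write $v_1,\dots,v_5$ for the corresponding coefficient vectors
\[
v_1=\bigl(0,\tfrac12,\tfrac12\bigr),\quad v_2=\bigl(0,\tfrac13,\tfrac23\bigr),\quad v_3=\bigl(\tfrac14,\tfrac12,\tfrac14\bigr),\quad v_4=\bigl(\tfrac7{16},\tfrac38,\tfrac3{16}\bigr),\quad v_5=\bigl(\tfrac49,\tfrac13,\tfrac29\bigr)
\]
from \eqref{b1}--\eqref{b5}, and $K_1=K_3=\tfrac12$, $K_2=1$, $K_4=\tfrac18$, $K_5=\tfrac19$ for their constants, each at most $1$.

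For part (1) I first reduce to $(\alpha,\beta,\gamma)\in P_+$. If $\alpha\ge 0$ but $\beta$ or $\gamma$ is negative, replacing each negative coordinate by $0$ leaves the point in $P$ (verifying the six inequalities uses only $x_3\le 4/9$, $x_3+x_1\le 2/3$ and $x_3+3x_2/2\le 1$ for the original point) and does not decrease $\alpha n_3+\beta n_2+\gamma n_1$; so we may assume $(\alpha,\beta,\gamma)\in P_+$. Now enumerate the vertices of $P_+$ and check that every vertex is dominated coordinatewise by the origin or by one of $v_1,\dots,v_5$. Since any point of $P_+$ is a convex combination of its vertices, it follows that $(\alpha,\beta,\gamma)\le\sum_{i=1}^{5}\lambda_iv_i$ coordinatewise for some $\lambda_i\ge 0$ with $\sum_i\lambda_i\le 1$. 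Taking the $\lambda_i$-weighted sum of the five bounds of Theorem~\ref{main}, and using $n_j\ge 0$ and $\nu(G)\ge 0$,
\[
\nu(G)\ \ge\ \Bigl(\sum_i\lambda_i\Bigr)\nu(G)\ \ge\ \sum_i\lambda_i\bigl(v_i\cdot(n_3,n_2,n_1)-K_i\bigr)\ \ge\ \alpha n_3+\beta n_2+\gamma n_1-\sum_i\lambda_iK_i,
\]
and $\sum_i\lambda_iK_i\le\sum_i\lambda_i\le 1$; this is part (1).

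For part (2), let $(\alpha,\beta,\gamma)\in P$ with $\alpha<0$. The degree sum $3n_3+2n_2+n_1=2|E(G)|\ge 2(n-1)$ yields the key inequality $n_1\le n_3+2$ for connected $G$. Set $y=\bigl(0,\ \max(\beta,0),\ \max(\gamma+\alpha,0)\bigr)$; using $x_2\le 1/2$, $x_3+x_1\le 2/3$ and $x_3+x_2+x_1\le 1$ for $(\alpha,\beta,\gamma)$, one checks $y\in P_+$, so part (1) gives $\nu(G)\ge y\cdot(n_3,n_2,n_1)-1$. It remains to verify $\alpha n_3+\beta n_2+\gamma n_1-(2|\alpha|+1)\le y\cdot(n_3,n_2,n_1)-1$, which (using $-2|\alpha|=2\alpha$) rearranges to
\[
\alpha(n_3+2)\ \le\ \bigl(\max(\beta,0)-\beta\bigr)n_2+\bigl(\max(\gamma+\alpha,0)-\gamma\bigr)n_1.
\]
The right-hand side is at least $\alpha n_1$, since $\max(\beta,0)-\beta\ge 0$ and $\max(\gamma+\alpha,0)-\gamma\ge\alpha$, and $\alpha n_1\ge\alpha(n_3+2)$ because $\alpha<0$ and $n_1\le n_3+2$; this proves part (2).

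The step I expect to be the main obstacle is the polyhedral bookkeeping in part (1): listing all vertices of $P_+$ and confirming that the origin together with the five coefficient vectors of Theorem~\ref{main} dominates them all, so that the weighted-sum argument can be run with every coefficient $K_i\le 1$. The remaining manipulations are routine, the one genuine graph-theoretic ingredient being the degree-sum bound $n_1\le n_3+2$, which is exactly what is responsible for the penalty $2|\alpha|$ in part (2) and which is attained by the claw $K_{1,3}$.
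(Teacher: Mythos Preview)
Your proof is correct and follows essentially the same route as the paper's: reduce part~(1) to $P_+$ by replacing negative coordinates with $0$, then use that every vertex of $P_+$ lies below one of $v_1,\dots,v_5$ (or equals the origin) together with convexity and Theorem~\ref{main}; for part~(2), shift the first coordinate up to $0$ and absorb the cost via $n_1\le n_3+2$. The only cosmetic differences are that you spell out the convexity argument with explicit coefficients $\lambda_i$ rather than invoking convexity of the feasible set directly, and in part~(2) you project to $P_+$ in one step (defining $y$) where the paper first shifts to $(0,\beta,\gamma+\alpha)\in P$ and then invokes part~(1); your derivation of $n_1\le n_3+2$ via the degree sum is an alternative to the paper's spanning-tree argument in Lemma~\ref{n1n3}.
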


Note in particular that if $G$ is a connected subcubic graph then
$\nu(G)\ge\alpha n_3+\beta n_2+\gamma n_1-1$ for every
$(\alpha,\beta,\gamma)\in P_+$. Note also that if we consider
$G=K_{1,3}$ and $(\alpha,\beta,\gamma)=(-\lambda,0,\lambda+2/3)$
(which is in $P$ for all $\lambda\geq 0$), then the first bound in
Lemma~\ref{pmcor} is sharp for $\lambda=0$, and the second is sharp
for all $\lambda>0$.

In the other direction, the fact that $L\subseteq P$ is a consequence of the following result, which we will prove in 
Section \ref{pmain2}.

\begin{theorem}\label{main2}
If $(\alpha,\beta,\gamma)\not\in P$ then for every
constant $K$ there exists a connected subcubic graph $G$ such that
$\nu(G)<\alpha n_3+\beta n_2+\gamma n_1-K$.
\end{theorem}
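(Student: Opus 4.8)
The plan is to prove the contrapositive in the form stated: assuming $(\alpha,\beta,\gamma)\notin P$, we exhibit, for each prescribed constant $K$, a connected subcubic graph $G$ violating $\nu(G)\ge\alpha n_3+\beta n_2+\gamma n_1-K$. Since $P$ is the intersection of six half-spaces, the hypothesis $(\alpha,\beta,\gamma)\notin P$ means that at least one of the six defining inequalities fails strictly. I would treat the six cases one at a time, and for each violated inequality produce an infinite family of connected subcubic graphs on which $\alpha n_3+\beta n_2+\gamma n_1-\nu(G)$ tends to $+\infty$; taking a graph far enough along the family then beats any fixed $K$. The arithmetic governing which family to use is already visible in the sharpness discussion after Theorem~\ref{main}: each facet inequality of $P$ corresponds to an extremal graph, so the natural candidates are long chains of copies of these extremal gadgets.

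Concretely, I would use the following building blocks. For $x_3\le 4/9$: disjoint unions (or a connected chaining) of $K_4$'s, where $n_3=4$ and $\nu=1$ per copy, so $\alpha n_3-\nu$ grows like $(4\alpha-1)t$ for $t$ copies when $\alpha>4/9$; to keep the graph connected and subcubic, link consecutive $K_4$'s by subdividing one edge in each and joining, or hang them off a long path — one must check the correction to $\nu$ and the degree sequence is $O(1)$ per copy. For $x_2\le 1/2$: long paths or cycles, where $n_2\approx n$ and $\nu\approx n/2$, so $\beta n_2-\nu\sim(\beta-1/2)n$. For $x_3+x_1\le 2/3$: chains of claws $K_{1,3}$ (each contributing $n_3=1$, $n_1=3$, $\nu=1$), connected e.g.\ by identifying or joining leaves, giving growth $\sim(\alpha+3\gamma-2)t$ after accounting for the $O(1)$ loss from connecting. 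For $x_3+3x_2/2\le 1$: chains of triangles (each triangle with one vertex used as a connector has $n_3\le 2$, $n_2\ge 1$); more precisely a long chain of triangles glued at vertices realises the ratio $n_3:n_2$ needed so that $\alpha n_3+\beta n_2-\nu\to\infty$ when $\alpha+3\beta/2>1$. For $x_3+x_2+x_1\le 1$: a family mixing degree-$3$, degree-$2$ and degree-$1$ vertices with $\nu\approx n_3+n_2+n_1$ in the right proportion — e.g.\ disjoint edges give $n_1=2$, $\nu=1$ for the pure $x_1$ direction, and combining with paths and $K_4$-chains covers the facet. For $x_3+x_2/6\le 1/2$: the triangle-based family again, now with the proportion $n_3:n_2 = 6:?$ tuned so the growth rate is $(\alpha+\beta/6-1/2)$ times the number of gadgets.

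The key technical point in each case is the bookkeeping: chaining $t$ copies of a gadget into a \emph{connected} subcubic graph changes $n_3,n_2,n_1$ by $O(1)$ overall (only near the joints) and changes $\nu$ by at most $O(1)$ relative to $t$ times the per-gadget value, so the dominant term is linear in $t$ with the claimed positive slope, and all error terms are absorbed. I would state a small lemma: if there is a connected subcubic graph $H$ with a designated vertex $v$ of degree $\le 2$ such that $\alpha n_3(H)+\beta n_2(H)+\gamma n_1(H)-\nu(H)=\delta>0$ and such that $t$ copies of $H$ can be linked in a path-like fashion at their designated vertices to form a connected subcubic graph $G_t$ with $\nu(G_t)\le t\,\nu(H)+O(1)$ and degree sequence within $O(1)$ of $t$ times that of $H$, then $\alpha n_3(G_t)+\beta n_2(G_t)+\gamma n_1(G_t)-\nu(G_t)\to\infty$. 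Verifying the matching bound $\nu(G_t)\le t\,\nu(H)+O(1)$ for the specific gadgets is routine (each is small, and the linking edges can increase $\nu$ by at most one per joint, most of which are already accounted for), but doing it cleanly and uniformly across all six facets is where the real work lies; this is the step I expect to be the main obstacle, essentially a careful case analysis rather than a single clever idea.
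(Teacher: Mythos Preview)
Your high-level plan—treat the six facet inequalities of $P$ one at a time and build, for each, an infinite connected family along which $\alpha n_3+\beta n_2+\gamma n_1-\nu(G)\to+\infty$—is exactly what the paper does. The difficulty lies entirely in choosing the right families, and several of your proposed gadgets do not work.

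The clearest error is for the facet $x_3\le4/9$: you assert $\nu(K_4)=1$, but $K_4$ has a perfect matching, so $\nu(K_4)=2$. Even after correcting this, any family assembled from copies of $K_4$ will have $\nu/n_3\to1/2$, and hence cannot witness the facet for $\alpha\in(4/9,1/2]$. The paper's construction here is genuinely more subtle: take a complete cubic tree of odd depth $t+1$ and, at each leaf, attach a copy of the graph $J$ obtained by subdividing one edge of $K_4$, identifying the leaf with the unique degree-$2$ vertex of $J$. The result is a connected cubic graph in which a set of $\sim2^{t+1}$ vertices (the even levels of the tree) leaves $\sim2^{t+2}$ odd components when removed, forcing $\nu/n_3\to4/9$ by the deficiency formula.

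Your claim that chaining $t$ gadgets changes $(n_3,n_2,n_1)$ by only $O(1)$ is also false: there are $t-1$ joints, each altering a bounded number of degrees, so the change is $\Theta(t)$ and can shift the asymptotic degree proportions entirely. For instance, linking $t$ claws by joining pairs of leaves gives $n_3=t$, $n_2=2(t-1)$, $n_1=t+2$, so the family now carries a large $\beta$-term and no longer isolates the combination $x_3+x_1$. The paper's construction for this facet is simply the cubic tree itself (with nothing attached), which has $n_2=0$, $n_3\approx n_1$, and $\nu/(n_3+n_1)\to1/3$.

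Your suggestions for the facets $x_3+3x_2/2\le1$ and $x_3+x_2/6\le1/2$ are too vague to evaluate, but note that chains of triangles test $\beta>1/3$, not either of these inequalities. The paper uses, respectively, a cubic graph with every edge subdivided once, and a $2t$-cycle with pendant edges each capped by a copy of $J$. For $x_3+x_2+x_1\le1$ the paper uses the $2t$-cycle with $t$ pendant edges (so $n_1=n_2=n_3=t$, $\nu=t$), and for $x_2\le1/2$ it uses odd cycles, as you do.
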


Our results generalize previous work.
For example, the first bound in Theorem \ref{bdetc} is a special case of \eqref{b5}; the bound $\nu\ge(n-1)/3$ follows from
a convex combination of \eqref{b2} and \eqref{b5}.  On the other hand, the answer to the question of
Biedl, Demaine, Duncan, Fleischer and Kobourov~\cite{BD} as to 
whether $\nu(G)\ge (3n+n_2)/9$ for every subcubic graph is negative
by Theorem \ref{pl}: the vector $(1/3,4/9,1/3)$ is not in $P$ as it violates the 
inequality $x_1+x_2+x_3\le1$, and Example 3 in Section \ref{pmain2} is a counterexample.

\section{$P\subseteq L$}\label{pmain}

In this section we prove one direction of Theorem \ref{pl}, namely that $P\subseteq L$ 
(leaving aside the proof of Theorem \ref{main}, which we defer to the next section).
We will prove that $P\subseteq L$ in two steps.  We first show that it is enough to consider just $P_+$, and then prove that $P_+\subseteq L$.

We begin with the following simple but useful observation.

\begin{lemma}\label{n1n3}
In any connected subcubic graph $G$ we have $n_3\geq n_1-2$.
\end{lemma}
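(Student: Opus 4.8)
The plan is to prove the inequality $n_3 \geq n_1 - 2$ by a counting argument based on the degree sequence together with the connectivity of $G$. First I would recall the handshake identity $\sum_i i\, n_i = 2|E(G)|$, which for a subcubic graph reads $3n_3 + 2n_2 + n_1 = 2|E(G)|$, so $n_1 \equiv n_3 \pmod 2$ and in particular $3n_3 + n_1$ is even. This parity observation alone is not enough, so the real content must come from connectivity: a connected graph on $n = n_1 + n_2 + n_3$ vertices has at least $n - 1$ edges, giving $3n_3 + 2n_2 + n_1 \geq 2(n_1 + n_2 + n_3 - 1)$, which simplifies directly to $n_3 \geq n_1 - 2$. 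That is essentially the whole proof.

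So the approach I would take is: let $m = |E(G)|$, write down $2m = 3n_3 + 2n_2 + n_1$ from the degree sum, use $m \geq n - 1 = n_1 + n_2 + n_3 - 1$ from connectivity, substitute, and cancel the $2n_2$ term to obtain $3n_3 + n_1 \geq 2n_1 + 2n_3 - 2$, i.e. $n_3 \geq n_1 - 2$. I would note for completeness that this is tight: the path $P_3$ on three vertices has $n_1 = 2$, $n_3 = 0$, and more generally any tree whose internal vertices all have degree $2$ (a path) achieves equality, as does $K_{1,3}$ with $n_1 = 3$, $n_3 = 1$.

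There is essentially no obstacle here — the only thing to be careful about is the edge case where $G$ has a single vertex or is disconnected, but the hypothesis of connectedness handles this, and the bound $m \geq n-1$ holds trivially when $n = 1$ as well (both sides are at most $0$, and then $n_3 = n_1 = 0$ so the conclusion is immediate). An alternative, more hands-on argument would be to take a spanning tree $T$ of $G$ and observe that adding back the non-tree edges only increases degrees, so it suffices to prove the statement when $G$ is a tree; for a tree one can induct on the number of leaves, but the edge-counting proof above is cleaner and I would use that.
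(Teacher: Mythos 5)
Your proof is correct, and it is in substance the same counting argument as the paper's: the paper passes to a spanning tree and invokes the identity $t_1=t_3+2$ for trees of maximum degree $3$, which is exactly your handshake-plus-connectivity computation specialized to a tree. Your direct version on $G$ itself, using $2|E(G)|=3n_3+2n_2+n_1$ and $|E(G)|\geq n-1$, is equally valid and handles the edge cases you mention.
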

\begin{proof}
Let $T$ be a spanning tree of $G$, and
let $t_i$ denote the number of vertices of degree $i$ in $T$. Then
$t_1\geq n_1$, $t_3\leq n_3$, and $t_1=t_3+2$. Thus $n_3\geq n_1-2$.
\end{proof}

Next we note 
some closure properties of $L$.

\begin{lemma}\label{lamb}
\begin{enumerate}
\item $L$ is convex.

\item $L$ is downward closed: if $(a_3,a_2,a_1)\in L$ and $b_i\le a_i$ for all $i$ then $(b_3,b_2,b_1)\in L$.  

\item If $(x_3,x_2,x_1)\in L$ then $(x_3-\lambda,x_2,x_1+\lambda)\in L$ for
all $\lambda\geq 0$.
\end{enumerate} 
\end{lemma}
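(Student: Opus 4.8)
The plan is to verify the three closure properties of $L$ in order, each by a short direct argument that pushes a witnessing inequality for one triple to a witnessing inequality for another, keeping careful track of the additive constant $K$.

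\medskip
\textbf{Convexity.} Suppose $(a_3,a_2,a_1)\in L$ and $(b_3,b_2,b_1)\in L$, with witnessing constants $K_a$ and $K_b$, and let $t\in[0,1]$. For any connected subcubic graph $G$, multiply the inequality $\nu(G)\ge a_3n_3+a_2n_2+a_1n_1-K_a$ by $t$ and the inequality $\nu(G)\ge b_3n_3+b_2n_2+b_1n_1-K_b$ by $1-t$ and add: this gives $\nu(G)\ge (ta_3+(1-t)b_3)n_3+(ta_2+(1-t)b_2)n_2+(ta_1+(1-t)b_1)n_1-(tK_a+(1-t)K_b)$, so $t(a_3,a_2,a_1)+(1-t)(b_3,b_2,b_1)\in L$ with constant $tK_a+(1-t)K_b$. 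Hence $L$ is convex.

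\medskip
\textbf{Downward closure.} If $(a_3,a_2,a_1)\in L$ with constant $K$ and $b_i\le a_i$ for each $i$, then since $n_i\ge 0$ we have $b_in_i\le a_in_i$, so $\nu(G)\ge a_3n_3+a_2n_2+a_1n_1-K\ge b_3n_3+b_2n_2+b_1n_1-K$ for every connected subcubic $G$; thus $(b_3,b_2,b_1)\in L$ with the same constant $K$.

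\medskip
\textbf{Shifting weight from degree $3$ to degree $1$.} This is the one part that uses a structural fact rather than just nonnegativity of the $n_i$, and it is where I expect the only real (though still minor) work to lie. Suppose $(x_3,x_2,x_1)\in L$ with constant $K$, and fix $\lambda\ge 0$. For any connected subcubic $G$, Lemma~\ref{n1n3} gives $n_3\ge n_1-2$, equivalently $-\lambda n_3\le -\lambda n_1+2\lambda$. Adding this to $\nu(G)\ge x_3n_3+x_2n_2+x_1n_1-K$ yields
\[
\nu(G)\ge (x_3-\lambda)n_3+x_2n_2+(x_1+\lambda)n_1-(K+2\lambda),
\]
so $(x_3-\lambda,x_2,x_1+\lambda)\in L$ with constant $K+2\lambda$. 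This completes all three parts. The point to be careful about is that in this last step the additive constant degrades by $2\lambda$ (which is exactly the phenomenon recorded in Corollary~\ref{pmcor}), but since the definition of $L$ only asks for the \emph{existence} of some constant $K$, this causes no difficulty.
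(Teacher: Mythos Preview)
Your proof is correct and follows essentially the same approach as the paper's own proof: convexity by taking a convex combination of the two witnessing inequalities, downward closure from nonnegativity of the $n_i$, and the shift from $x_3$ to $x_1$ via Lemma~\ref{n1n3} with the constant degrading by $2\lambda$. The only difference is cosmetic (the paper uses dot-product notation for the first two parts), and your closing remark linking the $2\lambda$ loss to Corollary~\ref{pmcor} is apt.
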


\begin{proof}
Suppose that $\mathbf a=(a_3,a_2,a_1)$, $\mathbf b=(b_3,b_2,b_1)$ lie in $L$, with associated constants $K_a, K_b$.  Thus for every subcubic graph $G$, 
say with parameters  $\mathbf n=(n_3,n_2,n_1)$ and matching number $\nu$, we have $\mathbf a\cdot\mathbf n \leq \nu+K_a$ and  $\mathbf b\cdot\mathbf n \leq \nu+K_b$.
Suppose that $\lambda\in[0,1]$ and $\mathbf c=\lambda\mathbf a+(1-\lambda)\mathbf b$.  Then
\begin{align*}
\mathbf c\cdot \mathbf n 
&= \lambda \mathbf a\cdot\mathbf n+(1-\lambda)\mathbf b\cdot \mathbf n \\
&\leq \lambda(\nu+K_a)+(1-\lambda)(\nu+K_b)\\
&=\nu+ \lambda K_a+(1-\lambda)K_b.
\end{align*}
It follows that $\mathbf c\in L$, with associated constant $\lambda K_a+(1-\lambda)K_b$.  Thus $L$ is convex.

For the second claim, simply note that if $\mathbf a\in P$ with associated constant $K$,
then for every subcubic graph $G$, 
say with parameters  $\mathbf n=(n_3,n_2,n_1)$ and matching number $\nu$, we have
$\mathbf b\cdot\mathbf n\leq \mathbf a\cdot\mathbf n \leq \nu +K$, so $\mathbf b\in L$ with associated constant $K$.

Now for the final part.
Let $K$ be such that $\nu(G)\geq x_3n_3+x_2n_2+x_1n_1-K$ for every
connected subcubic graph $G$. By Lemma~\ref{n1n3} we have $n_3\geq n_1-2$, and so 
$(x_3-\lambda)n_3+x_2n_2+(x_1+\lambda)n_1-(K+2\lambda)\leq
x_3n_3+x_2n_2+x_1n_1-K\leq\nu(G)$, which shows that
$(x_3-\lambda,x_2,x_1+\lambda)\in L$.
\end{proof}

The next lemma will allow us to restrict our attention to $P_+$.

\begin{lemma}\label{Pplus}
If $P_+\subseteq L$ then $P\subseteq L$.
\end{lemma}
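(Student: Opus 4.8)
The goal is to reduce the claim ``$P\subseteq L$'' to the claim ``$P_+\subseteq L$'', so the plan is to take an arbitrary point $(\alpha,\beta,\gamma)\in P$ and exhibit a point of $P_+$ that dominates it coordinatewise, after possibly applying the shift operation from Lemma~\ref{lamb}(3); then combine that with downward closure (Lemma~\ref{lamb}(2)). First I would examine which coordinates of a point of $P$ can be negative. Since $P$ is cut out by the six listed inequalities, and in particular by $x_3+3x_2/2\le 1$ together with $x_3\le 4/9$, $x_2\le 1/2$, $x_3+x_1\le 2/3$, $x_3+x_2+x_1\le1$, $x_3+x_2/6\le 1/2$, one checks that the only obstruction to lying in the nonnegative orthant is that $x_3$, $x_2$, or $x_1$ could be negative. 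The key structural point is that the inequalities defining $P$ never force a coordinate to be negative, and increasing a negative coordinate to $0$ (or shifting weight from $x_3$ to $x_1$) keeps us inside $P$.

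\textbf{Key steps.} Let $(\alpha,\beta,\gamma)\in P$. Step 1: handle a negative $\beta$. If $\beta<0$, replace it by $\beta'=0$; since all six defining inequalities of $P$ have a nonnegative coefficient on $x_2$ on the left-hand side (the coefficients are $0,1,0,3/2,1,1/6$), raising $x_2$ only makes the left-hand sides larger, so we must check this does not break membership --- actually it is cleaner to argue the reverse: the point with $\beta$ lowered back down still satisfies everything, and we want to go \emph{up}. So instead: note $(\alpha,\beta,\gamma)$ is dominated by $(\alpha,\max(\beta,0),\gamma)$, and we must verify the latter is still in $P$. The only inequalities that could fail when $\beta$ increases are those with positive $x_2$-coefficient, i.e. $x_2\le1/2$, $x_3+3x_2/2\le1$, $x_3+x_2+x_1\le1$, $x_3+x_2/6\le1/2$; but if $\beta<0$ then raising it to $0$ makes these read $0\le1/2$, $\alpha\le1$, $\alpha+\gamma\le1$, $\alpha\le1/2$, all of which follow from $\alpha\le4/9$, $\alpha+\gamma\le2/3$ (for the third, using $\gamma$ small --- here one may first need Step 2). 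So the honest order is: Step 1, if $\gamma<0$, use downward closure to note $(\alpha,\beta,\gamma)$ is dominated by $(\alpha,\beta,\max(\gamma,0))$ and check that raising $\gamma$ to $0$ keeps us in $P$ (only $x_3+x_1\le2/3$ and $x_3+x_2+x_1\le1$ have positive $x_1$-coefficient, and raising $\gamma$ to $0$ makes these $\alpha\le2/3$, $\alpha+\beta\le1$, which hold). Step 2, if $\alpha<0$, apply Lemma~\ref{lamb}(3) with $\lambda=|\alpha|$ to move to $(0,\beta,\gamma+|\alpha|)$, which is in $L$ iff $(\alpha,\beta,\gamma)$ is, and check $(0,\beta,\gamma+|\alpha|)\in P$: the relevant inequalities become $0\le4/9$, $\beta\le1/2$, $\gamma+|\alpha|\le2/3$ (which is $x_3+x_1\le2/3$ evaluated at the shifted point, and equals the original $\alpha+\gamma\le2/3$ after... wait, $0+(\gamma+|\alpha|)=\gamma-\alpha$, not $\gamma+\alpha$) --- so this needs care. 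Step 3, now $\beta\ge0$: raise it to handle the remaining case, or rather, having cleared $\alpha<0$ and $\gamma<0$ first, deal with $\beta<0$ by raising it to $0$ as sketched above. Finally, the resulting point lies in $P_+\subseteq L$, and since it dominates (or is $L$-equivalent to, via the shift) the original point, Lemma~\ref{lamb}(2),(3) give $(\alpha,\beta,\gamma)\in L$.

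\textbf{Main obstacle.} The delicate point is the interaction between the shift operation of Lemma~\ref{lamb}(3), which moves weight from $x_3$ to $x_1$ and therefore \emph{changes} the value of the left-hand side $x_3+x_1$ only trivially (it is shift-invariant) but changes $x_1\le$ things and $x_3\le$ things, and the direction in which the defining inequalities bend. I expect the bookkeeping to require doing the three cleanups in the right sequence --- most naturally: first fix $\gamma<0$, then fix $\alpha<0$ by the shift (which can only increase $\gamma$, so does not reintroduce $\gamma<0$), then fix $\beta<0$ by raising to $0$ (which does not touch $\alpha$ or $\gamma$) --- and checking at each stage that the six inequalities survive, using the fact that $P$'s inequalities, evaluated at a point with some coordinate pushed up to $0$, reduce to inequalities already known to hold. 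The arithmetic is routine once the order is fixed; the only real thought is confirming that no step undoes a previous one and that the shift in Step 2 keeps $x_3+x_1\le2/3$ and $x_3+x_2+x_1\le1$ valid (both are preserved exactly, since the shift leaves $x_3+x_1$ and $x_3+x_2+x_1$ unchanged).
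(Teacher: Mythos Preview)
Your overall strategy---replace negative coordinates by zero (using downward closure) and handle a negative $x_3$ via the shift of Lemma~\ref{lamb}(3)---is exactly the paper's approach. However, there is a genuine computational error in your Step~2 that propagates into a false structural claim.

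The shift in Lemma~\ref{lamb}(3) says: if $(x_3,x_2,x_1)\in L$ then $(x_3-\lambda,x_2,x_1+\lambda)\in L$. You want to deduce $(\alpha,\beta,\gamma)\in L$ from some point with first coordinate $0$. That means you need a point $x'$ with $x'_3=0$ such that shifting $x'$ by $\lambda=|\alpha|$ lands on (or above) $(\alpha,\beta,\gamma)$; solving $0-\lambda=\alpha$ and $x'_1+\lambda=\gamma$ gives $x'_1=\gamma-|\alpha|=\gamma+\alpha$, not $\gamma+|\alpha|$. (You half-noticed this in your parenthetical ``wait''.) Consequently your assertion that the shift ``can only increase $\gamma$, so does not reintroduce $\gamma<0$'' is backwards: when $\alpha<0$ the correct target $(0,\beta,\gamma+\alpha)$ has \emph{smaller} third coordinate than $(\alpha,\beta,\gamma)$, and it may well be negative even if you have already arranged $\gamma\ge 0$. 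Also, Lemma~\ref{lamb}(3) is a one-way implication, not an ``iff''.

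The paper avoids this tangle by processing the coordinates in the order $x_2$, then $x_3$, then $x_1$: first raise $x_2$ to $0$ if negative (the resulting point is in $P$ because inequalities~4 and~6 then follow from inequality~1, and inequality~5 from inequality~3); next, assuming $x_2\ge 0$, shift to $(0,x_2,x_1+x_3)$ and check membership in $P$ (inequalities~3 and~5 are shift-invariant, as you correctly observed, and~4,~6 follow from~2); finally, assuming $x_3\ge 0$, raise $x_1$ to $0$ if negative (inequality~3 then follows from~1, and~5 from~1 and~2). With this order nothing is undone. Your order can also be made to work once the sign is fixed---one simply raises $x_1$ to $0$ again after the shift---but as written your argument has a gap at precisely the point you flagged as needing care.
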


\begin{proof}
Consider $x=(x_3,x_2,x_1)\in P\setminus L$. Our aim is to find a point in
$P_+\setminus L$. If each $x_i$ is non-negative then $x$ is such a
point, so we assume the contrary.

First suppose $x_2<0$. We claim that $x'=(x_3,0,x_1)\in P$. Since $x\in
P$, the first and third inequalities defining $P$ are immediate for
$x'$, and the second is trivial. The fourth and sixth inequalities
follow from the first, and the fifth follows from the third. Therefore
$x'\in P$. Now if $x'\in L$ then $x\in L$ because $L$ is downward
closed, contradicting our choice of $x$. Thus $x'\in P\setminus L$.

Therefore we may assume that $x_2\geq 0$. Next we consider the case in
which $x_3<0$. Set $\lambda=-x_3$ and let
$x'=(x_3+\lambda,x_2,x_1-\lambda)=(0,x_2,x_1+x_3)$. We claim that
$x'\in P$. The first inequality for $P$ is trivial, and the second,
third and fifth are true because $x\in P$. The fourth and sixth
inequalities are implied by the second. Thus $x'\in P$. If $x'\in L$
then by Lemma \ref{lamb} the point
$(x_3+\lambda-\lambda,x_2,x_1-\lambda+\lambda)=x\in L$, contradicting
our choice of $x$. Therefore $x'\in P\setminus L$ and we may assume
$x_3\geq 0$.

Finally suppose $x_1<0$. Then we claim $x'=(x_3,x_2,0)\in P\setminus
L$. To check $x'\in P$ observe that the first, second, fourth and
sixth inequalities are true because $x\in P$. The third follows from
the first and the fifth follows from the first and second. Again we may conclude
$x'\notin L$ because $L$ is downward closed. Hence $x'\in P\setminus
L$ as required, completing the proof that $P_+\subseteq L$ implies
$P\subseteq L$.
\end{proof}

It is therefore enough to prove that $P_+\subseteq L$.
Since $L$ is a convex set, it is enough to show that the extreme points of
$P_+$ all belong to $L$. 
The extreme points of $P_+$ (written as $(x_3,x_2,x_1)$)  are
\begin{align*}
\{(0,1/2,1/2),
(0,1/3,2/3), 
(1/4,1/2,1/4), 
(7/16,3/8,3/16), \\
(4/9,1/3,2/9),
(1/4,1/2,0), 
(7/16,3/8,0),
(0,1/2,0), 
(4/9,0,0),\\
(0,0,0),
(4/9,1/3,0),
(0,0,2/3), 
(4/9,0,2/9)\}.
\end{align*}
This can be verified by hand, or (as we did) by using a computational package such as {\em polymake} \cite{polymake}.

Our aim is then to show that all thirteen extreme points of $P_+$ belong to $L$.
Since $L$ is downward closed, it is enough to consider the points that do not lie below any others: for instance, $(7/16,3/8,0)$
lies below $(7/16,3/8,3/16)$, so $(7/16,3/8,3/16)\in L$ implies that $(7/16,3/8,0)\in L$.  This leaves us with the following five points:
\begin{align*}
\{(0,1/2,1/2),
(0,1/3,2/3), 
(1/4,1/2,1/4), 
(7/16,3/8,3/16),
(4/9,1/3,2/9)\}.
\end{align*}
The fact that these points all belong to $L$ follows from Theorem \ref{main}, which we prove in the next section.  We conclude that $P\subseteq L$.

\section{Proofs of Theorem~\ref{main} and Corollary~\ref{pmcor}}\label{pmainb}

First we show how Corollary \ref{pmcor} follows from 
Theorem \ref{main}.

\begin{proof}
Let $G$ be a connected subcubic graph. 
Observe that by Theorem~\ref{main} and monotonicity, we have
$\nu(G)\geq\alpha n_3+\beta n_2+\gamma n_1-1$  for each extreme point
$(\alpha,\beta,\gamma)$ of $P_+$. By convexity, the same inequality
holds for every point $(\alpha,\beta,\gamma)\in P_+$. 

Now suppose $(\alpha,\beta,\gamma)\in P$ and $\alpha\geq 0$. Then
(arguing as in the proof of Lemma~\ref{Pplus}) we know that
$(\alpha,\beta',\gamma')\in P_+$ where $\beta'=\max\{\beta,0\}$ and
$\gamma'=\max\{\gamma,0\}$. Hence
$$\nu(G)\geq\alpha n_3+\beta' n_2+\gamma' n_1-1\geq\alpha n_3+\beta
n_2+\gamma n_1-1.$$ 

If $\alpha<0$ then set $\lambda=|\alpha|$. Then as in the proof of Lemma~\ref{Pplus} we find
that
$(\alpha+\lambda,\beta,\gamma-\lambda)=(0,\beta,\gamma-\lambda)\in
P$. Hence by the previous paragraph  
$\nu(G)\geq\beta n_2+(\gamma-\lambda)n_1-1$. By Lemma~\ref{n1n3} we
have $2\lambda\geq\lambda n_1-\lambda n_3$. Summing these two
inequalities and rearranging gives $\nu(G)\geq\alpha n_3+\beta
n_2+\gamma n_1-(2\lambda+1)$ as required. 
\end{proof}

The remainder of this section is devoted to the proof of Theorem \ref{main}.  

\begin{lemma}\label{pm} Let $G$ be a connected subcubic graph with $n$
  vertices. Suppose $\nu(G)\geq (n-1)/2$. Then $G$ satisfies Theorem~\ref{main}.
\end{lemma}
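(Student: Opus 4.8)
The plan is to exploit the hypothesis $\nu(G)\ge (n-1)/2$ together with the trivial inequalities $n_1+n_2+n_3\le n$ and $n\ge \nu$ to verify each of the five bounds \eqref{b1}--\eqref{b5} directly, with $c=1$ since $G$ is connected. The key point is that every coefficient vector appearing on the right-hand side of \eqref{b1}--\eqref{b5}, namely $(0,1/2,1/2)$, $(0,1/3,2/3)$, $(1/4,1/2,1/4)$, $(7/16,3/8,3/16)$ and $(4/9,1/3,2/9)$, has all coordinates at most $1/2$. Hence for each such vector $(\alpha,\beta,\gamma)$ we have $\alpha n_3+\beta n_2+\gamma n_1 \le (n_1+n_2+n_3)/2 \le n/2$, so it suffices to check that $n/2 - (\text{constant}) \le \nu(G)$, i.e. that $(n-1)/2$ already dominates the claimed lower bound. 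For \eqref{b1} and \eqref{b3}, whose constant is $c/2=1/2$, this is immediate: $\alpha n_3+\beta n_2+\gamma n_1 - 1/2 \le n/2 - 1/2 = (n-1)/2 \le \nu(G)$.

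For the remaining three bounds \eqref{b2}, \eqref{b4}, \eqref{b5}, the subtracted constant is smaller than $1/2$ (it is $1$, $1/8$, $1/9$ respectively, but note \eqref{b2} subtracts $c=1$ which is actually larger, so that case is also immediate), so one cannot simply invoke $\alpha n_i\le n_i/2$ for all three coordinates at once — one must use that the coefficient vectors are strictly below $(1/2,1/2,1/2)$ in some coordinates. The cleanest way is: for a vector $(\alpha,\beta,\gamma)$ with $\max\{\alpha,\beta,\gamma\}\le 1/2$ and with, say, $\alpha,\gamma\le 1/2-\delta$ for suitable $\delta>0$, we get $\alpha n_3+\beta n_2+\gamma n_1 \le n/2 - \delta(n_3+n_1)$. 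If $n_3+n_1\ge 2$ this already gives the bound with constant at least $2\delta$; the only remaining case is $n_3+n_1\le 1$, i.e. $G$ is (essentially) $2$-regular or a single vertex/edge, where $\nu(G)$ can be computed or bounded directly (an even cycle is perfect, an odd cycle on $m$ vertices has $\nu=(m-1)/2$, etc.), and one checks the handful of small cases. Actually, since every bound has all coefficients $\le 1/2$ and we also know $n_1\le 2$ is \emph{not} guaranteed, the honest route is a short coordinate-by-coordinate accounting; I expect each of the five to reduce to verifying a linear inequality in $n_1,n_2,n_3$ under the single constraint $\nu\ge(n-1)/2$, which is routine once set up.

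The main obstacle — really the only one — is handling \eqref{b4} and \eqref{b5}, where the coefficients $7/16$ and $4/9$ on $n_3$ are closest to $1/2$ and the subtracted constants ($1/8$ and $1/9$) are smallest; here the slack $\delta=1/2-7/16=1/16$ and $\delta=1/2-4/9=1/18$ is just barely enough, since $2\delta = 1/8$ and $1/9$ respectively match the constants exactly, so one needs the inequality $\delta(n_3+n_1)\ge$ constant to be tight precisely when $n_3+n_1\ge 2$ — and to dispose of the finitely many graphs with $n_3+n_1\le 1$ by hand. I would write the proof as: fix one of the five vectors, write $S=\alpha n_3+\beta n_2+\gamma n_1$, bound $S\le n/2 - \delta(n_3+n_1)$ using the explicit $\delta$, then split on whether $n_3+n_1\ge 2$ (conclude immediately) or $n_3+n_1\le 1$ (check the constantly many possibilities: isolated vertex, single edge, path $P_2$, cycles), in each case confirming $S - c\cdot(\text{const}) \le \nu(G)$.
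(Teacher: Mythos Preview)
Your central claim that ``every coefficient vector \ldots\ has all coordinates at most $1/2$'' is false: the vector for \eqref{b2} is $(0,1/3,2/3)$, and $2/3>1/2$. Your parenthetical remark that \eqref{b2} is ``also immediate'' because the subtracted constant is $c=1$ does not rescue this, since the inequality $\alpha n_3+\beta n_2+\gamma n_1\le n/2$ simply fails when $\gamma=2/3$. The paper handles \eqref{b2} by invoking Lemma~\ref{n1n3} ($n_3\ge n_1-2$), which gives $n_1\le 1+n/2$ and hence $n_2/3+2n_1/3-1\le n/3+n_1/3-1\le n/2-2/3$; without that structural input the case does not go through.

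There is also an arithmetic slip in your treatment of \eqref{b4} and \eqref{b5}. You need $S-K\le (n-1)/2$, i.e.\ $S\le n/2-(1/2-K)$, so the required inequality is $\delta(n_3+n_1)\ge 1/2-K$, not $\delta(n_3+n_1)\ge K$. For \eqref{b4} this means $(n_3+n_1)/16\ge 3/8$, i.e.\ $n_3+n_1\ge 6$, not $\ge 2$; your ``$2\delta$ matches the constant'' observation is a coincidence without the significance you attach to it. The paper avoids this by bounding $S\le (\max_i\alpha_i)\,n$ instead (the maximum coefficient, not the gap to $1/2$), which reduces \eqref{b4} to $n\le 5$ and \eqref{b5} to $n\le 6$ --- a cleaner finite check than the one your route would actually require.
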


\begin{proof}
Bounds \eqref{b1} and \eqref{b3} are immediate. Bound \eqref{b4} holds unless
$7n/16-1/8>n/2-1/2$, which implies $n\leq 5$. If \eqref{b5} fails to hold
then $4n/9-1/9>n/2-1/2$, which means $n\leq 6$. These cases are easily
checked. For \eqref{b2}, using Lemma~\ref{n1n3} we find $n_1\leq n_3+2\leq n-n_1+2$, and hence $n_1\leq 1+n/2$. Thus
$n_2/3+2n_1/3-1\leq n/3+n_1/3-1\leq n/2+1/3-1$.
\end{proof}

In particular, if $G$ has a perfect matching or if $G$ is
hypomatchable (meaning $G-v$ has a perfect matching for every $v\in
V(G)$) then Theorem \ref{main} holds.

In our proof we will make use of the Gallai-Edmonds structure
theorem (see, for instance, \cite{LP}). In the statement below, the sets $A$, $B$ and $C$ are defined
as follows (here $\Gamma(A)$ denotes the neighbourhood of $A$).
\begin{itemize}
\item $A=\{v\in V(G):\nu(G-v)=\nu(G)\}$,
\item $B=\Gamma(A)\setminus A$,
\item $C=V(G)\setminus(A\cup B)$.
\end{itemize}

\begin{theorem}\label{GE}(Gallai-Edmonds) Let $G$ be a graph. Then
\begin{enumerate}
\item every component of $G[A]$ is hypomatchable,
\item every component of $G[C]$ has a perfect matching,
\item every $X\subseteq B$ has neighbours in at least $|X|+1$ components of $G[A]$.  
\end{enumerate}
\end{theorem}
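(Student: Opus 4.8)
The statement is the classical Gallai--Edmonds structure theorem, so my plan is to carry out its standard proof (as in~\cite{LP}) via alternating paths relative to a fixed maximum matching. Fix a maximum matching $M$ of $G$; call a vertex \emph{exposed} if $M$ misses it and \emph{even} if it can be reached from some exposed vertex by an $M$-alternating path of even length, allowing a vertex to reach itself by the trivial path. The central first step is to prove that $A$ is exactly the set of even vertices, so in particular this set does not depend on $M$. One inclusion is easy: flipping $M$ along an even alternating path from an exposed vertex to an even vertex $v$ produces a maximum matching missing $v$, so $v\in A$. For the converse, given $v\in A$ pick a maximum matching $M'$ missing $v$; since $M$ and $M'$ are both maximum, $M\triangle M'$ contains no augmenting path, so each of its path components has equally many $M$-edges and $M'$-edges and is therefore an even-length alternating path with one end exposed by $M$ and the other by $M'$; the path component containing $v$, read from its $M$-exposed end, exhibits $v$ as even with respect to $M$.

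Granting this description, the rest is bookkeeping. Since $M$ has no augmenting path, every non-exposed even vertex is $M$-matched, and every neighbour of an even vertex is itself even or else $M$-matched to an even vertex; hence each vertex of $B=\Gamma(A)\setminus A$, not being even, is $M$-matched into $A$ along a distinct edge. It follows that $M$ uses no edge from $C=V\setminus(A\cup B)$ to $A\cup B$, so $M$ restricts to a perfect matching of every component of $G[C]$, proving item~2. For item~1 I would prove that each component $K$ of $G[A]$ is factor-critical: for $w\in K$, take a maximum matching missing $w$ and analyse the blossoms formed by the even vertices reachable from $w$ (the step discussed below) to extract a near-perfect matching of $K$ missing only $w$; in particular each $K$ has odd order, and a short computation with Berge's formula then shows that $B$ is a tight set and that $\mathrm{def}(G)=c(G[A])-|B|$. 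Finally, for item~3 I would argue by contradiction: if some nonempty $X\subseteq B$ had neighbours in at most $|X|$ components of $G[A]$, then deleting $B\setminus X$ from $G$ would leave at least $c(G[A])-|X|=|B\setminus X|+\mathrm{def}(G)$ odd components, so $B\setminus X$ would be another maximum Tutte--Berge set, and a counting argument using the factor-critical structure of the components then rules this out.

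The step I expect to be the real obstacle is the factor-criticality claim inside item~1. The even-path description supplies, for each $w\in K$, a maximum matching of $G$ missing $w$, but that matching may match some vertices of $K$ into $B$ instead of staying inside $K$, so it need not restrict to a perfect matching of $K-w$; turning it into a near-perfect matching of $K$ alone is precisely where the theory of \emph{blossoms} is needed, because two even alternating paths to the same vertex of $K$ can run through a shared odd cycle with opposite parities. A rigorous treatment either contracts maximal blossoms and lifts the structure back (an induction on $|V(G)|$) or recasts the information as an odd-ear decomposition of $K$. I would use blossom contraction for item~1 and keep the lighter even-path and Berge-formula arguments for items~2 and~3.
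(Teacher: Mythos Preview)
The paper does not prove Theorem~\ref{GE}: it is stated as the classical Gallai--Edmonds structure theorem, with a pointer to Lov\'asz and Plummer~\cite{LP}, and is then used as a black box in the inductive proof of Theorem~\ref{main}. There is therefore no ``paper's own proof'' for you to match.

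For what it is worth, your sketch is the standard textbook development and you have correctly isolated the genuinely delicate step, namely the factor-criticality of the components of $G[A]$ and the need for blossom contraction (equivalently, an odd-ear decomposition) to resolve the parity conflict when two even alternating paths share an odd cycle. Your arguments for items~2 and~3 are lighter but on the right track; the one place a referee would push back is the last clause of your item~3 argument. Showing that $B\setminus X$ is again a Tutte--Berge maximizer does not by itself give a contradiction, and you should say explicitly how to finish: for instance, if $|N_H(X)|=|X|$ in the bipartite component graph $H$, then in every maximum matching $X$ is matched perfectly onto $N_H(X)$, so every vertex of every component $K\in N_H(X)$ is covered by every maximum matching, contradicting $K\subseteq A$. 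For the purposes of this paper, though, none of this is required; citing~\cite{LP} is exactly what the authors do.
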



One consequence of Theorem~\ref{GE} is that we may assume
$B\not=\emptyset$, otherwise each component of $G$ has a perfect
  matching or is hypomatchable, in which case we are
  done by Lemma~\ref{pm}. Note also that Part (3) implies that each vertex of $B$ has degree at least two.

It is easy to check that all the bounds in Theorem~\ref{main} hold for
graphs with at most three vertices, so we assume $G$ has $n\geq 4$ 
vertices and that the theorem is true for graphs with fewer than $n$ vertices.
Since we may consider each component separately, we may assume $G$ is connected.
Choose a vertex  $v\in B$, and consider $G-v$. Since $v\notin A$ we                      
know $\nu(G-v)=\nu(G)-1$. Let $t_i$ denote the number of
neighbours of $v$ of degree $i$ for $i=1,2,3$. Let $U$ denote the
set of neighbours of $v$ of degree 1, so $|U|=t_1$.
Then $G'=G-v-U$ satisfies 
$\nu(G')=\nu(G)-1$. 

Let $n_i'$ denote the number of vertices of degree $i$ in $G'$. Since each degree-3 neighbour of $v$ becomes a
degree-2 vertex, the number of degree-3 vertices drops by $t_3$, plus
one more if $v$ itself has degree 3. Thus 
$n_3'=n_3-t_3-(d(v)-2)=n_3-t_3-(t_1+t_2+t_3-2)=n_3-2t_3-t_2-t_1+2$. Each
degree-2 neighbour of $v$ becomes a degree-1 vertex, and if $v$ has
degree 2 then the number of degree-2 vertices drops by one more. Hence
$n_2'=n_2+t_3-t_2-(3-d(v))=n_2+t_3-t_2-(3-t_1-t_2-t_3))=n_2+2t_3+t_1-3$. Finally
$n_1'=n_1-t_1+t_2$, and $c'\leq 
t_3+t_2$. Then by the induction hypothesis,

\begin{enumerate}
\item $
\begin{aligned}[t]
\nu(G')&\geq n_2'/2+n_1'/2-c'/2\\
       &\geq
  n_2/2+(2t_3+t_1-3)/2+n_1/2+(t_2-t_1)/2-(t_3+t_2)/2\\
       &=n_2/2+n_1/2-1/2+(t_3-2)/2,
\end{aligned}
$
\item $
\begin{aligned}[t]
\nu(G')&\geq n_2'/3+2n_1'/3-c'\\
       &\geq
  n_2/3+(2t_3+t_1-3)/3+2n_1/3+2(t_2-t_1)/3-(t_3+t_2)\\
       &=n_2/3+2n_1/3-1-(t_3+t_2+t_1)/3,
\end{aligned}
$
\item $
\begin{aligned}[t]
\nu(G')&\geq n_3'/4+n_2'/2+n_1'/4-c'/2\\
       &\geq n_3/4+(2-2t_3-t_2-t_1)/4+n_2/2+(2t_3+t_1-3)/2+n_1/4\\
&\qquad +(t_2-t_1)/4-(t_3+t_2)/2\\
       &=n_3/4+n_2/2+n_1/4-1/2-(t_2+1)/2,
\end{aligned}
$
\item $
\begin{aligned}[t]
\nu(G')&\geq 7n_3'/16+3n_2'/8+3n_1'/16-c'/8\\
&\geq 7n_3/16+7(2-2t_3-t_2-t_1)/16+3n_2/8+3(2t_3+t_1-3)/8\\
&\qquad +3n_1/16+3(t_2-t_1)/16-(t_3+t_2)/8\\
       &= 7n_3/16+3n_2/8+3n_1/16-1/4-t_3/4-3t_2/8-t_1/4\\
       &=[7n_3/16+3n_2/8+3n_1/16-1/8]-(4t_3+6t_2+4t_1+2)/16,
\end{aligned} 
$
\item $
\begin{aligned}[t]
\nu(G')&\geq 4n_3'/9+n_2'/3+2n_1'/9-c'/9\\
       &\geq 4n_3/9+4(2-2t_3-t_2-t_1)/9+n_2/3+(2t_3+t_1-3)/3\\
&\qquad +2n_1/9+2(t_2-t_1)/9-(t_3+t_2)/9\\
       &=4n_3/9+n_2/3+2n_1/9-1/9-(t_3+t_2+t_1)/3.
\end{aligned}
$
\end{enumerate}
Since $\nu(G)=\nu(G')+1$ and $t_3+t_2+t_1\leq 3$ it follows from the calculations above that bounds \eqref{b1}, \eqref{b2} and \eqref{b5} hold
for $G$. (In fact \eqref{b2} alternatively follows from \eqref{b5} together with Lemma~\ref{lamb}(3)).

We now focus on bounds \eqref{b3} and \eqref{b4}. Note that in these cases, our inductive statement gives
$$\nu(G')\geq n_3/4+n_2/2+n_1/4-1/2-(t_2+1)/2,$$
and
$$\nu(G')\geq [7n_3/16+3n_2/8+3n_1/16-1/8]-(4t_3+6t_2+4t_1+2)/16.$$

First we note some consequences of
Theorem~\ref{GE} and the above calculations.

\begin{lemma}\label{cons}
\begin{enumerate}
\item Every $v\in B$ has at least two neighbours in $A$.
\item If $x\in A$ has exactly two neighbours $u$ and $w$, and if $u\in
  B$, then $w\in B$ as well.
\item If \eqref{b4} fails for $G$ then every $v\in B$ has degree 3.
\item If one of \eqref{b3} and \eqref{b4} fails for $G$ then every $v\in B$ has at
  least two degree-2 neighbours.
\end{enumerate}
\end{lemma}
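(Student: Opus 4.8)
The plan is to prove each of the four parts of Lemma~\ref{cons} by combining the Gallai--Edmonds structure (Theorem~\ref{GE}) with the inductive inequalities for $\nu(G')$ derived just before the lemma, where $G'=G-v-U$ for a chosen $v\in B$.

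\textbf{Parts (1) and (2).} For (1), recall that $v\in B$ means $v\in\Gamma(A)$, so $v$ has at least one neighbour in $A$. Suppose for contradiction $v$ has exactly one neighbour $x\in A$. Let $D$ be the component of $G[A]$ containing $x$. Consider $B'=B\setminus\{v\}$ together with the components of $G[A]$ other than $D$: I would argue that removing $v$ together with the edge $vx$ either disconnects $D$ from the rest of $A$ or violates the expansion condition (Part (3) of Theorem~\ref{GE}) applied to a suitable subset of $B$, using that $v$ contributed to the neighbourhood of at most the single component $D$. More carefully, since every $X\subseteq B$ has neighbours in $\geq|X|+1$ components of $G[A]$, and $v$ has neighbours in only one component, one can take $X=B$ and note that the other $|B|-1$ vertices of $B$ must already reach $\geq|B|$ components among those not equal to $D$ (or reaching $D$ only through $v$); counting components of $G[A]$ and vertices of $B$ then yields a contradiction with $\nu(G-v)=\nu(G)-1$ (equivalently, with the deficiency formula $\nu(G)=\tfrac12(n-(\text{number of components of }G[A])+|B|)$). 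For (2), if $x\in A$ has exactly two neighbours $u,w$ with $u\in B$, then $x$ forms a component $\{x\}$ of $G[A]$ (since both its neighbours lie outside $A$), and this hypomatchable component has the single $B$-neighbour $u$ unless $w\in B$ too. If $w\in C$ then the edge $xw$ would have to be used or... actually the cleanest route: if $w\notin B$ then $w\notin A$ forces $w\in C$, but then $x$ has only one neighbour in $B$, and the singleton component $\{x\}$ together with the expansion property (apply Part (3) to $X=\{u\}$, which must see $\geq 2$ components of $G[A]$) does not immediately contradict anything — so instead I would use that $\{x\}$ being a one-vertex component adjacent to $C$ contradicts the Gallai--Edmonds property that there are no edges between $A$-components and $C$ except... hmm. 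The intended argument is: $x$ is itself a (trivial) hypomatchable component of $G[A]$; removing $u$ should drop $\nu$ by exactly $1$, and $u$'s other role... Let me instead phrase (2) as: since $x\in A$ and $x$ has exactly neighbours $u,w$, the component of $G[A]$ containing $x$ is $\{x\}$. If $w\in C$, there is an edge from an $A$-component to $C$; but Gallai--Edmonds does allow such edges, so this is not a contradiction by itself — rather, one uses the matching structure directly. I expect Part~(2) to require care but to follow from a short direct argument about alternating paths, or from applying Part~(3) of Theorem~\ref{GE} cleverly.

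\textbf{Parts (3) and (4).} These are where the inductive inequalities come in. For (3): suppose \eqref{b4} fails for $G$ and some $v\in B$ has degree $2$, so $t_1+t_2+t_3=2$. Choosing this $v$, the displayed bound $\nu(G')\geq[7n_3/16+3n_2/8+3n_1/16-1/8]-(4t_3+6t_2+4t_1+2)/16$ together with $\nu(G)=\nu(G')+1$ gives $\nu(G)\geq[7n_3/16+3n_2/8+3n_1/16-1/8]+ (16-(4t_3+6t_2+4t_1+2))/16 = [\text{RHS of \eqref{b4}}] + (14-4t_3-6t_2-4t_1)/16$; if $d(v)=2$ then $4t_3+6t_2+4t_1\le 6t_2+4(t_1+t_3)=6t_2+4(2-t_2)=8+2t_2\le 12 < 14$ (using $t_2\le 2$), so the correction term is positive and \eqref{b4} holds for $G$ — contradiction. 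Hence every $v\in B$ has degree $3$. For (4): suppose one of \eqref{b3}, \eqref{b4} fails; by (3), if \eqref{b4} fails then $d(v)=3$ for all $v\in B$. Now choose $v\in B$ and suppose $v$ has at most one degree-$2$ neighbour, i.e. $t_2\le 1$. Using the displayed bound for \eqref{b3}, $\nu(G)=\nu(G')+1\geq n_3/4+n_2/2+n_1/4-1/2+(1-(t_2+1)/2) = [\text{RHS of \eqref{b3}}]+(1-t_2)/2\geq[\text{RHS of \eqref{b3}}]$, so \eqref{b3} holds for $G$ with room to spare when $t_2\le 1$; equality forces $t_2=1$. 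To also rule out \eqref{b4} failing, redo the computation above for \eqref{b4} with $d(v)=3$ (so $t_1+t_2+t_3=3$) and $t_2\le1$: the correction is $(14-4t_3-6t_2-4t_1)/16=(14-4\cdot3+... )$; with $t_1+t_3=3-t_2$ we get $4(t_1+t_3)+6t_2 = 12+2t_2\le 14$, with equality iff $t_2=1$, so the correction is $\geq 0$ and positive when $t_2=0$, again forcing $t_2\ge 1$... but we need $\geq 2$. So the crude argument only gives $t_2\ge 1$, not $t_2\ge 2$; closing this gap is the real content. The missing input must be Parts (1) and (2): a degree-$3$ vertex $v\in B$ has $\geq 2$ neighbours in $A$ by (1), and each such neighbour $x\in A$, if it has degree $2$ and $v$ is one of its neighbours, forces its other neighbour into $B$ by (2); I would argue that a degree-$3$ neighbour of $v$ lying in $A$ would have to be in a hypomatchable component of $G[A]$ of a special form, and count degrees to show at least two of $v$'s neighbours must actually be the degree-$2$ vertices, boosting $t_2$ to $\ge 2$.

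\textbf{Main obstacle.} The hardest step is Part~(4) (and the Part~(1)/(2) structural facts it relies on): getting from ``$v\in B$ has $\ge 2$ neighbours in $A$'' to ``$v$ has $\ge 2$ \emph{degree-2} neighbours'' requires understanding the local structure of the hypomatchable $A$-components adjacent to $v$ — in particular ruling out the case where $v$'s $A$-neighbours have degree $3$ — and this is where I expect to need a careful case analysis (possibly contracting an $A$-component of $G[A]$ to a single vertex, using hypomatchability to preserve the matching number, and reapplying the inductive inequality to the smaller graph). The degree/component bookkeeping in Parts (1) and (2) via the Gallai--Edmonds deficiency formula is routine once set up correctly, but one must be careful that the graph stays connected (or handle components) when deleting vertices.
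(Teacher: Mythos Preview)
Your proposal misidentifies where the difficulty lies, and in doing so leaves genuine gaps in the easy parts while inventing a phantom obstacle in the computational parts.

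\textbf{Part (1).} You miss the one-line argument: apply Theorem~\ref{GE}(3) with $X=\{v\}$. Then $v$ has neighbours in at least two distinct components of $G[A]$, hence at least two neighbours in $A$. No deficiency formula, no counting of components against $|B|$, is needed.

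\textbf{Part (2).} Your difficulty with the case $w\in C$ rests on a misconception: Gallai--Edmonds does \emph{not} permit edges between $A$ and $C$. By definition $B=\Gamma(A)\setminus A$, so every neighbour of a vertex of $A$ lies in $A\cup B$; since $x\in A$ and $w\in\Gamma(x)$, the case $w\in C$ is impossible outright. It remains to rule out $w\in A$: then $x$ and $w$ lie in the same component $H$ of $G[A]$, and $x$ has degree~$1$ in $H$ (its only other neighbour $u$ is in $B$). But $H$ is hypomatchable with $|H|\ge 3$, and such a graph has no vertex of degree~$1$. Hence $w\in B$.

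\textbf{Parts (3) and (4).} Here the arithmetic alone suffices; your ``main obstacle'' does not exist. Your own computation shows that the \eqref{b4} correction term is $(14-4t_3-6t_2-4t_1)/16$, and with $t_1+t_2+t_3=d(v)\le 3$ this equals $(14-4d(v)-2t_2)/16$. The point you miss is that when this correction is \emph{zero}, the inequality $\nu(G)\ge[\text{RHS of \eqref{b4}}]+0$ still establishes \eqref{b4}. So failure of \eqref{b4} forces the correction to be \emph{strictly negative}, i.e.\ $4d(v)+2t_2\ge 15$; since $d(v)\le 3$ this gives $d(v)=3$ (proving (3)) and $t_2\ge 2$. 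Likewise, failure of \eqref{b3} forces the \eqref{b3} correction $(1-t_2)/2$ to be strictly negative, i.e.\ $t_2\ge 2$. This proves (4) directly, with no appeal to (1), (2), alternating paths, or contraction of $A$-components.
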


\begin{proof}
We have already noted that the first statement is immediate from Theorem~\ref{GE}(3). To verify
the second claim, observe that if $w\in A$ then $u$ and $w$ are both
in a component $H$ of $G[A]$, which is hypomatchable by
Theorem~\ref{GE}. But $x$ has degree 1 in $H$, which is not possible
in a hypomatchable component. Thus $w\in B$.

If \eqref{b3} fails then $t_2\ge2$; if \eqref{b4} fails then 
$4t_3+6t_2+4t_1\ge15$ and so (as $d(v)\le3$) we have $t_2\ge2$ and $t_1+t_2+t_3=3$.
The last two assertions follow immediately, as the same calculation holds for any vertex of $B$.
\end{proof}

Next we derive some elementary facts about the neighbours of degree-2 vertices.

\begin{lemma}\label{twotwo}
Suppose $G$ fails to satisfy one of \eqref{b3} and \eqref{b4}. Then no two degree-2 vertices of $G$ are adjacent. Furthermore every vertex of $B$ has degree 3.
\end{lemma}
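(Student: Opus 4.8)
\textbf{Proof plan for Lemma~\ref{twotwo}.}
The plan is to continue the inductive analysis already set up for bounds \eqref{b3} and \eqref{b4}, refining the choice of the vertex $v\in B$ and the vertex set removed from $G$. By Lemma~\ref{cons}(3)--(4), if $G$ fails one of \eqref{b3}, \eqref{b4} then every $v\in B$ has degree~$3$ and at least two degree-$2$ neighbours; in particular the ``furthermore'' clause is already in hand, so the work is to rule out two adjacent degree-$2$ vertices. Suppose for contradiction that $x$ and $y$ are adjacent degree-$2$ vertices. I would split into cases according to where $x$ and $y$ sit in the Gallai--Edmonds partition $A\cup B\cup C$. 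Since $B$-vertices have degree~$3$ here, neither $x$ nor $y$ lies in $B$, so each of them is in $A$ or in $C$.

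First I would dispose of the case where one of them, say $x$, lies in $A$. Then $x$ has degree~$2$ and one of its neighbours is $y$. If $y\in A$ as well, then $x$ and $y$ lie in a common hypomatchable component $H$ of $G[A]$, and $x$ has degree~$\le 2$ in $H$; a hypomatchable graph has no degree-$1$ vertex (used already in Lemma~\ref{cons}(2)), and one also checks a degree-$2$ vertex forces $H$ to be a small configuration (an edge or a triangle or similar) that can be handled directly or is excluded. If instead $y\in C$, then $y$ lies in a component of $G[C]$ with a perfect matching, but $y$ has a neighbour ($x$) outside $C$ and degree~$2$, which constrains $y$'s other neighbour; I would then remove $v$ together with a short structure around $x,y$ and re-run the degree bookkeeping. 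If $y\in B$ this is impossible since $B$-vertices have degree~$3$. So the remaining, main case is $x,y\in C$.

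For $x,y\in C$ adjacent of degree~$2$: the edge $xy$ lies in a component $D$ of $G[C]$, which has a perfect matching. The idea is to pick $v\in B$ adjacent into a component of $G[A]$ and instead of deleting only $v$ and its degree-$1$ neighbours, delete a slightly larger set so as to also ``use up'' the edge $xy$ and improve the inductive slack. Concretely, I would recompute the changes $n_3',n_2',n_1',c'$ under the augmented deletion and check that the induction hypothesis for \eqref{b3} (resp.\ \eqref{b4}), which currently reads
$$\nu(G')\geq n_3/4+n_2/2+n_1/4-1/2-(t_2+1)/2$$
(resp. the \eqref{b4} analogue), together with $\nu(G)\ge\nu(G')+1$ and the new counts, now yields the desired bound for $G$ — the presence of the two adjacent degree-$2$ vertices should contribute exactly the extra $1/2$ (resp.\ the extra $1/16$-scale term) needed to close the gap, contradicting the assumption that the bound fails.

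The main obstacle I anticipate is the $x,y\in C$ case: one must choose the enlarged deleted set carefully so that (a) the resulting graph $G'$ still satisfies $\nu(G')=\nu(G)-(\text{size of a matching saturating the deleted part})$ with no loss, which requires exploiting the perfect matching of $D$ and the hypomatchability of $G[A]$-components to reroute an augmenting/alternating path cleanly, and (b) the degree-count arithmetic for $n_3',n_2',n_1',c'$ genuinely beats the threshold. Keeping $G'$ connected (or controlling $c'$) while doing this rerouting, and making sure the two degree-$2$ vertices are not ``wasted'' by accidentally being the degree-$1$ neighbours $U$ of $v$ or lying on the same small component as $v$, are the fiddly points; I would handle them by a careful choice of $v$ among the (at least two) $A$-components guaranteed by Theorem~\ref{GE}(3).
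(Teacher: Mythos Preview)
Your plan heads in the wrong direction and contains real gaps. The paper's proof does not use the Gallai--Edmonds partition at all for the first statement: it simply takes two adjacent degree-$2$ vertices $u,w$ and operates on them directly. If $u,w$ are not in a triangle or $4$-cycle, one \emph{suppresses} the path $u'uww'$ to a single edge $u'w'$, getting $\nu(G')=\nu(G)-1$, $n_3'=n_3$, $n_2'=n_2-2$, $n_1'=n_1$, $c'=1$; if $u,w$ lie in a triangle or $4$-cycle, one just deletes $\{u,w\}$ and recounts. In each case the arithmetic for \eqref{b3} and \eqref{b4} goes through by induction. There is no need to know where $u,w$ sit in $A\cup B\cup C$, and no enlarged deletion around a vertex of $B$ is required.

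By contrast, your case analysis has errors. First, the ``furthermore'' clause is not already in hand when it is \eqref{b3} that fails: Lemma~\ref{cons}(3) only covers \eqref{b4}, and for \eqref{b3} one needs the first statement (a degree-$2$ vertex $v\in B$ would, by Lemma~\ref{cons}(4), have two degree-$2$ neighbours, giving adjacent degree-$2$ vertices). Second, if $x\in A$ then any neighbour of $x$ outside $A$ lies in $B$ by definition, so your case $x\in A$, $y\in C$ is vacuous. Third, and more seriously, your claim that a degree-$2$ vertex in a hypomatchable component forces that component to be ``small'' is false: every odd cycle is hypomatchable with all vertices of degree~$2$, so the $x,y\in A$ case does not collapse as you suggest. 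Finally, in your main case $x,y\in C$, your plan to pick an unrelated $v\in B$ and enlarge the deleted set so as to ``use up'' the edge $xy$ is not a workable mechanism: $v$ need not be anywhere near $x,y$, and there is no reason the bookkeeping around $v$ should improve just because $C$ contains an edge between two degree-$2$ vertices. The fix is to abandon the Gallai--Edmonds casework here and argue directly on $u,w$ as the paper does.
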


\begin{proof}
Recall our assumption that $G$ has at least four vertices. If $G$ is a 4-cycle then
\eqref{b3} and \eqref{b4} are satisfied (by Lemma~\ref{pm}), so
let us assume otherwise. Suppose $u$ and $w$ are adjacent
degree-2 vertices.

If $u$ and $w$ are not in a triangle or 4-cycle then
suppressing $u$ and $w$ 
(i.e.~if $u'$ and $v'$ are the other neighbours of $u$, $v$ then we replace the path $u'uvv'$ by the edge $u'v'$)
gives a connected graph $G'$ with
$\nu(G')=\nu(G)-1$,
$n_3'=n_3$, $n_2'=n_2-2$, and $n_1'=n_1$. Then by the
induction hypothesis for \eqref{b3},
$\nu(G')\geq n_3'/4+n_2'/2+n_1'/4-1/2=                                 
  n_3/4+n_2/2+n_1/4-1/2-1$,
showing $G$ satisfies \eqref{b3}. For \eqref{b4} we have by induction
$\nu(G')\geq 7n_3'/16+3n_2'/8+3n_1'/16-1/8=
  7n_3/16+3n_2/8+3n_1/16-1/8-6/8$,
which also suffices.

If $uwx$ is a triangle then form $G'$ by removing $u$ and $w$. Then
$\nu(G')=\nu(G)-1$,
$n_3'=n_3-1$,
$n_2'=n_2-2$, $n_1'=n_1+1$, and $c'=1$.
For \eqref{b3} we get 
$\nu(G')\geq n_3'/4+n_2'/2+n_1'/4-1/2=
  n_3/4-1/4+n_2/2-1+n_1/4+1/4-1/2=[n_3/4+n_2/2+n_1/4-1/2]-1$,
showing $G$ satisfies \eqref{b3}. For \eqref{b4} we have by induction
$\nu(G')\geq  7n_3/16-7/16+3n_2/8-6/8+3n_1/16+3/16-1/8=[7n_3/16+3n_2/8+3n_1/16-1/8]-1$,
as needed.

If $u$ and $w$ are in a 4-cycle $uwxz$ then by assumption (say)
$x$ has degree 3. Form $G'$ by removing $u$ and $w$, so that $\nu(G')=\nu(G)-1$.
If $d(z)=3$ then $G'$ has $n_3'=n_3-2$, $n_2'=n_2$, $n_1'=n_1$, and $c'=1$.
Then using induction for \eqref{b3} we find
$\nu(G')\geq n_3'/4+n_2'/2+n_1'/4-1/2= 
  (n_3-2)/4+n_2/2+n_1/4-1/2=n_3/4+n_2/2+n_1/4-1/2-1/2$,
which suffices. For \eqref{b4} we get
$\nu(G')\geq  7n_3/16-14/16+3n_2/8+3n_1/16-1/8=[7n_3/16+3n_2/8+3n_1/16-1/8]-14/16$ as required.

If $d(z)=2$ the parameters become $n_3'=n_3-1$,
$n_2'=n_2-2$, and $n_1'=n_1+1$, giving for \eqref{b3}
$\nu(G')\geq n_3'/4+n_2'/2+n_1'/4-1/2=
  (n_3-1)/4+(n_2-2)/2+n_1/4=1/4-1/2+n_3/4+n_2/2+n_1/4-1/2-1$ as needed.
For \eqref{b4} we get
$\nu(G')\geq 
7n_3/16-7/16+3n_2/8-6/8+3n_1/16+3/16-1/8=[7n_3/16+3n_2\
/8+3n_1/16-1/8]-1$.
This completes the proof of the first statement. The second statement now follows using Lemma~\ref{cons}(3),(4).
\end{proof}

\begin{lemma}\label{deg23} Suppose $G$ fails to satisfy one of \eqref{b3} and
  \eqref{b4}. Then each degree-2 vertex $w$ has two degree-3 neighbours.
\end{lemma}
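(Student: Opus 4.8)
The plan is to continue the induction running in the proof of Theorem~\ref{main}: $G$ is connected with $n\ge4$ vertices, the theorem holds for all subcubic graphs with fewer vertices, and we assume one of \eqref{b3}, \eqref{b4} fails for $G$. Argue by contradiction: suppose some degree-2 vertex $w$ does not have two degree-3 neighbours. By Lemma~\ref{twotwo} no neighbour of $w$ has degree 2, so $w$ must have a neighbour $u$ of degree 1; let $z$ be the other neighbour of $w$. First I would rule out $d(z)=1$: in that case $\{u,w,z\}$ would be a component of $G$, impossible since $G$ is connected with $n\ge4$. Hence $d(z)=3$, and since $u$ (a leaf with unique neighbour $w\ne z$) is not a neighbour of $z$, the vertex $z$ has two further neighbours, so in fact $n\ge5$.

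Next I would pass to $G'=G-u-w$ and apply the induction hypothesis. Since $u$ is a leaf, some maximum matching of $G$ contains the edge $uw$, so $\nu(G')=\nu(G)-1$; and $G'$ is connected, because deleting the leaf $u$ leaves $w$ as a leaf whose deletion again preserves connectivity, so $c'=1$. For the degrees, deleting $u$ and $w$ removes one vertex of degree $1$ and one of degree $2$ and demotes $z$ from degree $3$ to degree $2$, with no other changes, so $n_3'=n_3-1$, $n_2'=n_2$, $n_1'=n_1-1$. As $n'=n-2<n$, the induction hypothesis applies. Substituting into $\nu(G')\ge n_3'/4+n_2'/2+n_1'/4-c'/2$ gives $\nu(G)=\nu(G')+1\ge n_3/4+n_2/2+n_1/4\ge n_3/4+n_2/2+n_1/4-1/2$, so \eqref{b3} holds for $G$; substituting into $\nu(G')\ge 7n_3'/16+3n_2'/8+3n_1'/16-c'/8$ gives $\nu(G)\ge[7n_3/16+3n_2/8+3n_1/16-1/8]+6/16$, so \eqref{b4} holds for $G$ as well. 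This contradicts the assumption that one of \eqref{b3}, \eqref{b4} fails for $G$. Hence $w$ has no degree-1 neighbour, and with Lemma~\ref{twotwo} both neighbours of $w$ have degree 3.

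I expect the only real care is in the reduction $G\mapsto G-u-w$: verifying that the matching number drops by exactly one (this is exactly where we use that $u$ is a leaf), that connectivity is preserved so that $c'=1$, and that the degree-sequence change $(n_3,n_2,n_1)\mapsto(n_3-1,n_2,n_1-1)$ is recorded correctly, the key point being that $z$ is demoted from degree $3$ to degree $2$. Once those are settled, the two substitutions into \eqref{b3} and \eqref{b4} are immediate and both leave strict slack, so no further case analysis is needed.
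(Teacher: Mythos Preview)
Your argument is correct and is essentially the same as the paper's: both use Lemma~\ref{twotwo} to exclude a degree-2 neighbour, then delete the leaf together with $w$, record $n_3'=n_3-1$, $n_2'=n_2$, $n_1'=n_1-1$, $c'=1$, $\nu(G')=\nu(G)-1$, and substitute into \eqref{b3} and \eqref{b4}. You are simply more explicit than the paper about why the other neighbour $z$ must have degree~3 and why $G'$ stays connected with $\nu$ dropping by exactly one.
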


\begin{proof} Lemma~\ref{twotwo} tells us that $w$ has no degree-2 neighbours. Suppose for a contradiction that $w$ has a
degree-1 neighbour $x$. Then (recalling $G$ has at least four vertices) $G'-\{w,x\}$ has $\nu(G')=\nu(G)-1$,
$n_3'=n_3-1$,
$n_2'=n_2$, $n_1'=n_1-1$, and $c'=1$. Then using induction for \eqref{b3} gives $\nu(G')\geq n_3'/4+n_2'/2+n_1'/4-c'/2\geq                                                                
n_3/4-1/4+n_2/2+n_1/4-1/4-1/2= [n_3/4+n_2/2+n_1/4-1/2]-1/2$, which suffices. For \eqref{b4} we get $\nu(G')\geq
7n_3'/16+3n_2'/8+3n_1'/16-c'/8\geq                      
7n_3/16-7/16+3n_2/8+3n_1/16-3/16-1/8=[7n_3/16+3n_2/8+3n_1/16-1/8]-10/16$. 
\end{proof}

Call a degree-3 vertex $v\in G$ {\it good} if it has two degree-2 neighbours that
do not have a common neighbour different from $v$.
Observe that if $v$ has three degree-2 neighbours then either $v$ is
good, or $G=K_{2,3}$, in which case \eqref{b3} and \eqref{b4} hold.

\begin{lemma}\label{good} Suppose $G$ fails to satisfy one of \eqref{b3} and
  \eqref{b4}. Then every good vertex $v$ of $G$ has three degree-2 neighbours,
  all of which are in different components of $G-v$.
\end{lemma}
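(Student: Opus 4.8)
Let $v$ be a good vertex of $G$, with degree-2 neighbours $w_1,w_2$ having no common neighbour other than $v$. The plan is to show (i) that $w_1$ and $w_2$ lie in different components of $G-v$, and then (ii) that the third neighbour $w_3$ of $v$ also has degree 2 and lies in a third component of $G-v$; the fact that all three neighbours have degree 2 and pairwise lie in different components then follows. Throughout I will use the contrapositive setup: assuming one of \eqref{b3} or \eqref{b4} fails for $G$, I will derive structural constraints, and the deletion arguments will produce a smaller graph $G'$ contradicting the induction hypothesis unless the claimed structure holds. Recall from Lemma~\ref{twotwo} that no two degree-2 vertices are adjacent (so in particular $w_1,w_2,w_3$ are pairwise non-adjacent and none of them is adjacent to a degree-2 vertex), and from Lemma~\ref{deg23} that each degree-2 vertex has two degree-3 neighbours.

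First I would show $w_1$ and $w_2$ lie in different components of $G-v$. Suppose not, so there is a $w_1$--$w_2$ path in $G-v$; since $w_1,w_2$ have no common neighbour besides $v$, this path (plus $v$) gives a cycle through $v$ of length at least $4$. Now delete $v$ together with $w_1$ and $w_2$: since $v\notin A$ (it has degree 3 with two degree-2 neighbours, and one checks $\nu(G-v)=\nu(G)-1$ as in the main induction — or simply use that $G-v-w_1-w_2$ still has a $w_1$-endpoint and $w_2$-endpoint structure allowing us to recover a near-perfect matching), we get $\nu(G')\geq\nu(G)-1$ with $G'$ connected (the deletion of $w_1,w_2$ does not disconnect because of the surviving path among the other neighbours of $v$, $w_1$, $w_2$). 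Counting: removing $w_1,w_2$ each drops a degree-3 vertex to degree 2 at their other endpoints; $v$'s third neighbour $w_3$ drops by one from removing $v$; so the net changes in $(n_3,n_2,n_1)$ are controlled, and plugging into the inductive forms of \eqref{b3} and \eqref{b4} recorded just before Lemma~\ref{cons} yields the desired bound for $G$, a contradiction. The key point making this work is that connectivity of $G'$ is preserved precisely because $w_1,w_2$ were in the same component of $G-v$.

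Next, having placed $w_1,w_2$ in distinct components of $G-v$, I would argue $w_3$ also has degree 2 and lies in yet a third component. If $d(w_3)=3$ or if $w_3$ lies in the same component of $G-v$ as (say) $w_1$, then deleting $v,w_1,w_2$ (now legitimate since $v\notin A$) produces $G'$ with $\nu(G')=\nu(G)-1$, and the component count $c'$ is at most $2$ in the bad cases but the vertex-degree bookkeeping still gives enough slack: the term $-(t_2+1)/2$ from bound 3 in the main calculation (with $t_2\ge 2$, forced by Lemma~\ref{cons}(4) since a failure of \eqref{b3} or \eqref{b4} forces two degree-2 neighbours) and the corresponding term for \eqref{b4} absorb the loss. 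Concretely, with $w_1,w_2$ in different components, removing $v$ splits $G$ into at least two pieces, and removing $w_1,w_2$ from those cannot increase the component count beyond what the calculation allows; if $w_3$ had degree 3 we would additionally gain from $n_3$ decreasing, and if $w_3$ were in the component of $w_1$ we would be undercounting components in our favour — either way the inductive bound for $G'$ translates to the required bound for $G$.

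The main obstacle I anticipate is the careful component-counting: in the deletion $G'=G-v-w_1-w_2$, one must track exactly how many components arise, using that $w_1$ and $w_2$ sit in different components of $G-v$ (so $G-v$ has $\geq 2$ components, but removing a degree-2 vertex from a component either leaves it connected or splits it into $2$), and then verify that $c'$ is small enough — or conversely large enough in our favour — that the inductive inequalities \eqref{b3} and \eqref{b4} close the argument. A secondary subtlety is confirming at each stage that $v\notin A$, i.e.\ that $\nu(G-v)=\nu(G)-1$, so that $\nu(G-v-w_1-w_2)=\nu(G)-1$ as well (removing the degree-$\le 1$ vertices $w_1,w_2$ of $G-v$ does not change the matching number); this is what licenses all the deletions. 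Once these counting details are pinned down, each case reduces to a one-line substitution into the recorded inductive forms of the two bounds, exactly as in the proofs of Lemmas~\ref{twotwo} and~\ref{deg23}.
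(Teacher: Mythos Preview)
Your plan has a genuine gap: plain deletion of $\{v,w_1,w_2\}$ cannot control the component count $c'$ tightly enough to close the induction for \eqref{b3} and \eqref{b4}. The paper's proof uses a different reduction: remove $v$ (and any degree-1 neighbours) and \emph{identify} the two good degree-2 neighbours $w$ and $x$ into a single new degree-2 vertex. The point of the identification is that it merges the components of $G-v$ containing $w$ and $x$, so the resulting graph has $c'\le 2-t_1$ rather than $c'\le 3-t_1$. Plugging into the inductive forms, one finds that \eqref{b3} and \eqref{b4} both hold for $G$ unless $t_2=3$ and $c'=2$; that residual case forces the third neighbour $y$ to lie in a separate component of $G-v$, and then repeating the identification argument with the pair $w,y$ (now known to have no common neighbour besides $v$) separates $x$ as well.

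To see concretely why your deletion fails, take the case where $w_3$ has degree $2$, $w_1$ and $w_2$ lie in the same component of $G-v$, and $w_3$ lies in a different one. Then $G'=G-v-w_1-w_2$ has $c'=2$, $n_3'=n_3-3$, $n_2'=n_2-1$, $n_1'=n_1+1$, and the \eqref{b3} calculation gives $\nu(G)\ge\nu(G')+1\ge [n_3/4+n_2/2+n_1/4-1/2]-1/2$, which is not enough. This is precisely a configuration the lemma must rule out, but your reduction does not. With identification instead, the same configuration still gives $c'=2$ and $t_2=3$, but now the conclusion is only that $w_3$ is separated from the merged vertex $\{w_1,w_2\}$; one then swaps the roles of $w_2$ and $w_3$ to finish. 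A secondary issue: your repeated appeal to ``$v\notin A$'' is both unjustified (a good vertex need not lie in $B$) and unnecessary, since all you need is $\nu(G)\ge\nu(G')+1$, which follows simply because any matching of $G'$ can be extended by an edge at $v$.
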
 

\begin{proof}
Let $w$ and $x$ be degree-2 neighbours that are not adjacent and
have no common neighbour other than $v$. As before, we write $t_i$ for the number of degree $i$ neighbours of $v$,
and $U$ for the set of degree 1 neighbours of $v$. Let $G'$ be the graph
obtained by removing $\{v\}\cup U$ and identifying $w$ and $x$ into a new vertex
of degree 2. Then $\nu(G')=\nu(G)-1$, $n_3'=n_3-t_3-1$,
$n_2'=n_2-t_2+t_3+1$, $n_1'=n_1-t_1+t_2-2$, and $c'\leq 2-t_1$.

The computation for \eqref{b3} becomes 
$\nu(G')\geq n_3'/4+n_2'/2+n_1'/4-c'/2\geq
n_3/4-t_3/4-1/4+n_2/2+(t_3+1-t_2)/2+n_1/4+(t_2-t_1-2)/4-(2-t_1)/2=
[n_3/4+n_2/2+n_1/4-1/2]+t_3/4-t_2/4+t_1/4-3/4$. Then \eqref{b3} holds unless $t_2=3$ and $c'=2$.

For \eqref{b4} we get  
$\nu(G')\geq 7n_3'/16+3n_2'/8+3n_1'/16-c'/8\geq
7n_3/16-7t_3/16-7/16+3n_2/8+3(t_3+1-t_2)/8+3n_1/16+3(t_2-t_1-2)/16-(2-t_1)/8
=[7n_3/16+3n_2/8+3n_1/16-1/8]-(t_3+3t_2+t_1+9)/16$, so \eqref{b4} holds unless $t_2=3$ and $c'=2$.

Hence in both cases we may assume that $t_2=3$ and so $c'=2$. Let $y$ be the third neighbour of
$v$. Since $c'=2$ we know that $y$ is 
in a different component of $G'$ (and hence of $G-v$) to $w$ and $x$.  In particular, $y$ is
not adjacent to $w$ or $x$ and
does not share a second common neighbour with either of them. Thus we
could apply the above argument with $w$ and $y$ and find that $x$ is
in a different component of $G-v$ from both $w$ and $y$. This
completes the proof.
\end{proof}

We may now complete the proof for \eqref{b3}.

\begin{lemma}\label{bound3}
$G$ satisfies \eqref{b3}.
\end{lemma}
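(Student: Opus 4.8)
Suppose for a contradiction that \eqref{b3} fails for $G$. Then $G$ fails to satisfy one of \eqref{b3} and \eqref{b4}, so Lemma~\ref{twotwo} applies: every vertex of $B$ has degree $3$, and no two degree-$2$ vertices of $G$ are adjacent. Recall also the standing assumptions that $G$ is connected, that $n\ge 4$, and (since we are not in the case handled by Lemma~\ref{pm}) that $B\neq\emptyset$. The plan is to rewrite \eqref{b3} as a statement about the Gallai--Edmonds decomposition and then verify it by a short count.

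Let $\cC$ be the set of components of $G[A]$. Each component of $G[A]$ is odd (being hypomatchable) and each component of $G[C]$ is even, so the Berge--Tutte formula with $S=B$ gives $\nu(G)\le\tfrac12(n+|B|-|\cC|)$; conversely, Theorem~\ref{GE}(3) and Hall's theorem provide a matching of $B$ into $|B|$ distinct components of $G[A]$, which (together with a near-perfect matching of each component of $G[A]$ and a perfect matching of $G[C]$) extends to a matching of $G$ of size $\tfrac12(n+|B|-|\cC|)$. Hence $\nu(G)=\tfrac12(n+|B|-|\cC|)$. Since $G$ is connected and $n=n_1+n_2+n_3$, a short computation shows that \eqref{b3} is equivalent to
$$|\cC|\ \le\ \frac{n_1+n_3}{2}+|B|+1,$$
so it suffices to prove this inequality, which will contradict the assumption that \eqref{b3} fails.

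To prove it I would classify the members of $\cC$. A one-vertex component $\{x\}$ sends all $\deg_G(x)\in\{1,2,3\}$ of its edges to $B$ (there are no $A$--$C$ edges, and a connected $G$ with $n\ge 4$ has no isolated vertex); let $S_i$ be the number of such singletons with $\deg_G(x)=i$, so that $|\cC|=S_1+S_2+S_3+N$, where $N$ is the number of components of $G[A]$ with at least two vertices. Any such large component $H$ is factor-critical, hence $2$-connected of odd order with minimum internal degree at least $2$, and the crucial claim is that $H$ contains at least two vertices of $G$-degree $3$: if $m$ of its vertices have internal degree $3$ then $m$ is even, so we are done if $m\ge 2$, while if $m=0$ then $H$ is an odd cycle, and if at most one vertex of $H$ had a neighbour in $B$ then $H$ would contain two consecutive degree-$2$ vertices of $G$, contradicting Lemma~\ref{twotwo}. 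Moreover each large $H$ sends at least one edge to $B$, since otherwise it would be a component of $G$. Now collect three bounds: (i) $n_1\ge S_1$; (ii) $n_3\ge|B|+S_3+2N$, since the vertices of $B$, the $S_3$ degree-$3$ singletons, and at least two vertices in each of the $N$ large components are all distinct degree-$3$ vertices of $G$; (iii) $S_1+2S_2+3S_3+N\le 3|B|$, by counting the edges between $A$ and $B$ (at most $3$ leave each vertex of $B$, while an $S_i$-singleton supplies exactly $i$ and each large component at least $1$). Combining (i), (ii) and $|\cC|=S_1+S_2+S_3+N$ gives
$$\frac{n_1+n_3}{2}+|B|+1-|\cC|\ \ge\ \frac{3|B|}{2}+1-\frac{S_1+2S_2+S_3}{2}\ \ge\ 1$$
by (iii), which yields the displayed inequality and hence \eqref{b3} --- the desired contradiction.

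The only step requiring real care is the claim that every component of $G[A]$ on at least two vertices contains two vertices of degree $3$ in $G$; everything else is bookkeeping once the Gallai--Edmonds reformulation is in place. (A more hands-on alternative would be to continue the inductive deletion arguments of the preceding lemmas, deleting a vertex $v\in B$ having a degree-$2$ neighbour $w\in A$ together with the other neighbour of $w$ --- which must lie in $B$ --- so that removing $v$ creates pendant edges and makes $\nu$ drop by exactly $2$; but controlling the number of components and the degree-weighted sum of the reduced graph then appears to need a lengthy case analysis.)
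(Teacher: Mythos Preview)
Your proof is correct and takes a genuinely different route from the paper's. The paper argues by further inductive deletions: it first disposes of the case where two degree-$3$ vertices are adjacent (delete the edge), then rules out degree-$3$ vertices with two degree-$1$ neighbours, and then leans on the ``good vertex'' machinery (Lemma~\ref{good}) to force every degree-$3$ vertex to be a cut vertex separating its three neighbours --- which is impossible --- finally reducing to the sunflower graphs $G_3(t)$. By contrast, you bypass the good-vertex analysis entirely: you translate \eqref{b3} via the Gallai--Edmonds formula $\nu=\tfrac12(n+|B|-|\cC|)$ into the inequality $|\cC|\le (n_1+n_3)/2+|B|+1$ and verify this by a short double count of the $A$--$B$ edges, using only Lemma~\ref{twotwo} (to get $\deg_G(v)=3$ for $v\in B$ and to force at least two $G$-degree-$3$ vertices on any odd-cycle component of $G[A]$). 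Your argument is cleaner and more conceptual for \eqref{b3} on its own; the paper's structural approach, on the other hand, develops the good-vertex lemma that is reused in the subsequent (harder) proof of \eqref{b4}. One small remark: your assertion that a factor-critical $H$ is $2$-connected is not true for general graphs, but it \emph{is} true in the subcubic setting (factor-critical implies $2$-edge-connected, and in a subcubic graph any cut vertex forces a bridge); in any case your argument only uses $\delta(H)\ge 2$, which follows directly from factor-criticality.
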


\begin{proof}
Suppose the contrary. If any
degree-3 vertex has another degree-3 vertex in its
neighbourhood, then we may verify \eqref{b3} by considering the graph
$G'$
obtained by deleting an edge joining two degree-3 vertices. In this
case $n_3'=n_3-2$, $n_2'=n_2+2$, $n_1'=n_1$ and $c'\leq 2$. Hence
using induction we get $\nu(G)\geq\nu(G')\geq
n_3'/4+n_2'/2+n_1'/4-2/2=n_3/4+n_2/2+n_1/4-1/2$, proving \eqref{b3} as
required.

Thus we may assume no two degree-3 vertices are adjacent. Next we
check that no degree-3 vertex has two degree-1 neighbours. If on the
contrary $x$ has degree-1 neighbours $v$ and $w$, and a third
neighbour $z$ (which necessarily has degree 2,
or else $G$ is $K_{1,3}$ and satisfies \eqref{b3}), form $G'$ by removing
$v$, $w$, and $x$. Then $n_3'=n_3-1$, $n_2'=n_2-1$, $n_1'=n_1-1$,
$c'=1$ and $\nu(G)=\nu(G')+1$. Therefore by induction $\nu(G)\geq
n_3'/4+n_2'/2+n_1'/4-c'/2+1\geq[n_3/4+n_2/2+n_1/4-1/2]-1/4-1/2-1/4+1$,
showing \eqref{b3} holds. Thus every degree-3 vertex has at least two
degree-2 neighbours.
 
Suppose a degree-2 vertex $w$ has neighbours $v$ and $z$ (which both have degree 3 by Lemma~\ref{deg23}).
If $v$ is good then $z$ is also good, since otherwise every other
degree-2 neighbour of $z$ (at least one of which exists) is also a
degree-2 neighbour of $v$, and would therefore be in the same
component of $G-v$ as $w$, contradicting Lemma~\ref{good}.
Therefore there are
no good vertices at all, since otherwise (since $G$ has at least one
degree-3 vertex, in $B$) by Lemma \ref{good} we would find that $G$ is a
subdivision of a connected 3-regular graph, but removing any 
degree-3 vertex results in 3 components. This is not possible since,
in particular, every connected graph has a vertex whose removal leaves
a connected graph. 

Since $G$ has no good vertices, in particular no
 degree-3 vertex can
have three degree-2
neighbours. 
So every degree 3 vertex has exactly two degree 2 neighbours.  It follows that 
$G$ is a cycle (of even length) with a pendant edge attached to
every second vertex (these are the graphs $G_3(t)$ in Example 3 in the next section). But \eqref{b3} holds for this graph, completing the
proof. 
\end{proof}

We are left to verify \eqref{b4}. We need one more technical lemma.

\begin{lemma}
No vertex in $B$ is good.
\end{lemma}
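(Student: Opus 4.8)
The plan is to argue by contradiction: suppose $G$ fails to satisfy one of \eqref{b3} and \eqref{b4}, and suppose some vertex $v\in B$ is good. Since $v\in B$, Lemma~\ref{twotwo} tells us $v$ has degree $3$, and Lemma~\ref{good} tells us that $v$ has three degree-$2$ neighbours $w_1,w_2,w_3$, all lying in different components of $G-v$. By Lemma~\ref{deg23}, each $w_i$ has two degree-$3$ neighbours, namely $v$ and one further vertex $z_i$. The key structural point is that because $w_1,w_2,w_3$ are in distinct components of $G-v$, the vertices $z_1,z_2,z_3$ are distinct (a $z_i$ equal to some $z_j$ with $i\ne j$ would put $w_i,w_j$ in a common component of $G-v$, via $z_i$), and moreover each $z_i$ lies in the same component of $G-v$ as $w_i$.

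Next I would use Lemma~\ref{cons}(1),(2): since $v\in A$ is false (indeed $v\in B$), this isn't directly the relevant item, but recall $v\in B$ has at least two neighbours in $A$ by Lemma~\ref{cons}(1). Since all three neighbours $w_1,w_2,w_3$ of $v$ are degree-$2$ vertices, at least two of them — say $w_1,w_2$ — lie in $A$. Now $w_1\in A$ has exactly two neighbours, $v$ and $z_1$, and $v\in B$; so by Lemma~\ref{cons}(2), $z_1\in B$ as well. Likewise $z_2\in B$. So now we have identified two further vertices $z_1,z_2\in B$, each of degree $3$ (by Lemma~\ref{twotwo}, since every vertex of $B$ has degree $3$ under our failure assumption), lying in two different components of $G-v$.

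The idea now is to repeat the good-vertex analysis at $z_1$. Since $z_1\in B$ has degree $3$, I want to check whether $z_1$ is good, i.e.\ whether it has two degree-$2$ neighbours with no common neighbour other than $z_1$. One of $z_1$'s neighbours is $w_1$, a degree-$2$ vertex. By Lemma~\ref{cons}(1) (applied to $z_1\in B$) and the fact that components of $G[A]$ are hypomatchable hence isolated vertices when all their vertices have degree $2$ forced into $A$ — more carefully, by Lemma~\ref{cons}(4), every vertex of $B$ has at least two degree-$2$ neighbours, so $z_1$ has a second degree-$2$ neighbour $w$. If $w=w_2$ or $w=w_3$ then $z_1$ would be adjacent to a $w_j$ in a different component of $G-v$, forcing $z_1=v$, a contradiction (or merging components). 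So $w\notin\{w_1,w_2,w_3\}$, and $w,w_1$ are two distinct degree-$2$ neighbours of $z_1$; if they had a common neighbour $u\ne z_1$, then $u$ would be adjacent to $w_1$, whose only neighbours are $v$ and $z_1$, so $u=v$, but $v$ is not adjacent to $w$ — contradiction. Hence $z_1$ is good. Now apply Lemma~\ref{good} to $z_1$: it must have three degree-$2$ neighbours, all in different components of $G-z_1$. But one of its neighbours is $w_1$, which in turn is adjacent to $v$, and $v$ has the two further degree-$2$ neighbours $w_2,w_3$ reaching into other parts of the graph; chasing this should produce a configuration contradicting "all three degree-$2$ neighbours of $z_1$ in different components of $G-z_1$'' — specifically, $v$ provides a path from $w_1$'s component back toward where the other neighbours of $z_1$ sit, or else forces $G$ to be a subdivided $3$-regular graph with the impossible property (as in the proof of Lemma~\ref{bound3}) that deleting any degree-$3$ vertex disconnects it.

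The main obstacle I anticipate is the last step: pinning down exactly why goodness propagating from $v$ to $z_1$ (and symmetrically to $z_2$) is contradictory. The cleanest route is probably to show that goodness of $v$ together with $z_1,z_2\in B$ being good forces the "every degree-$3$ vertex is good and its removal yields three components'' situation on a large induced piece, contradicting connectivity exactly as in Lemma~\ref{bound3}; the delicate part is verifying there are no small exceptional graphs ($K_{2,3}$, short subdivided multigraphs, etc.) sneaking through, which will need the standing assumption $n\ge 4$ together with a couple of explicit small-case checks. I would handle the propagation with the same "if $v$ is good then its degree-$2$-neighbour's other degree-$3$ endpoint $z$ is good'' argument already used inside the proof of Lemma~\ref{bound3}, now carried out entirely within $B$, and then invoke the connectivity contradiction.
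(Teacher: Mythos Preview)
Your approach is essentially the paper's: propagate goodness from $v\in B$ through the degree-$2$ vertices in $A$ to their other neighbours in $B$, and then derive a connectivity contradiction. The propagation step you outline (that $z_1\in B$ and $z_1$ is good) is correct and matches the paper's argument.

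The gap is in the final step. Your first proposed contradiction --- that paths through $v$ somehow prevent the three degree-$2$ neighbours of $z_1$ from lying in different components of $G-z_1$ --- does not work: there is no reason $v$ should connect two of those components once $z_1$ is removed. Your second proposal is the right one, but it cannot be about $G$ itself. You only establish goodness for vertices of $B$ reachable from $v$ by alternating through degree-$2$ vertices of $A$; nothing forces \emph{every} degree-$3$ vertex of $G$ to be good, so $G$ need not be a subdivided cubic graph. What the paper does is define $W$ to be the set of all vertices on paths $v w_1 w_2 \cdots w_r$ with each odd-indexed $w_i$ a degree-$2$ vertex in $A$ and each even-indexed $w_i$ in $B$, set $H=G[W]$, and show that $H$ (not $G$) is the subdivision of a connected graph $J$ on $B\cap W$.

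One further detail you should watch: since only two of the three $w_i$ are guaranteed to lie in $A$, each vertex of $B\cap W$ is only known to have at least two (not three) neighbours in $J$. So $J$ has minimum degree $\ge 2$, not $3$; the contradiction is then that $J-y$ has $d_J(y)\ge 2$ components for every $y$, i.e.\ every vertex of the connected graph $J$ is a cut vertex, which is impossible for any connected graph on at least two vertices.
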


\begin{proof}
Suppose on the contrary that $B$ contains good vertices. Let $v\in B$
be a good vertex. Let $W$ be the union of the vertex sets of all paths of
the form $vw_1w_2\ldots w_r$ where $r\geq 1$, each $w_i$ with $i$
odd is a degree-2 vertex in $A$, and each $w_i$ with $i$ even is in
$B$. Let $H$ be the subgraph of $G$ induced by $W$. Then $H$ is
connected.

We claim that each vertex of $W\cap B$ is good. To verify this,
consider a good vertex $w\in W\cap B$ (for example $w=v$). By
Lemma~\ref{cons}(1) we know $w$ has at least two neighbours $u$ and $x$
in $A$, and $d(u)=d(x)=2$ by Lemma~\ref{good}. Also, Lemma~\ref{cons}(2) implies that the other neighbour $z$ of $u$ is in
$B$ and hence is in $W\cap B$. Thus $d(z)=3$ by Lemma~\ref{cons}(3). If $z$ were not good then every degree-2
neighbour of $z$ different from $u$ (at least one of which exists, 
by Lemma~\ref{cons}(4)) would be a degree-2 neighbour of $w$, and
would hence be in the same component of $G-w$ as $u$, contradicting
Lemma~\ref{good}. Hence $z$ is good. Applying this observation repeatedly (moving along the paths used to define $H$) we
find that every vertex of $W\cap B$ is good.

By Lemma~\ref{cons}(2) we know that $A\cap W$ is independent, and each
$x\in A\cap W$ has exactly two neighbours in $B\cap W$. Since each
$w\in B\cap W$ is good, it has three degree-2 neighbours in $G$ by
Lemma~\ref{good}, at least two of which are in $A$ by
Lemma~\ref{cons}(1). So by Lemma~\ref{cons}(3) we know $B\cap W$ is independent.
Therefore $H$ is the subdivision of a connected subcubic graph $J$ with vertex set
$B\cap W$ and minimum degree at least 2. (Note that $J$ has no multiple edges by Lemma~\ref{good} and the fact that each $w\in B\cap W$ is good.)

Since each $w\in B\cap W$ is good, the graph $J$
has the property 
that $J-y$ has $d(y)\geq 2$ components for every vertex $y$ of
$J$. Such a graph cannot exist, so the proof is complete.
\end{proof}

We may therefore assume that no vertex in
$B$ has three degree-2 neighbours. 
Choose $v\in B$. By Lemma~\ref{twotwo} we have $d(v)=3$, and by Lemma~\ref{cons}(4) we know that
$v$ has at least two degree-2 neighbours, say $w$ and $x$. By
Lemma~\ref{cons}(1) at least one of them, say $w$, is in $A$. Since
$v$ is not good, the other neighbour $z$ of $v$ is not a degree-2
vertex, and $w$ and $x$ have another common neighbour $y$.
By Lemma~\ref{cons}(2) we know
$y$ is in $B$. Then by Lemma~\ref{cons}(3) we have that $y$ has
another neighbour $u$, and $d(u)\not=2$ since $y$ is not good.  
Since \eqref{b4} holds for $K_4$
with one edge deleted, we may assume $u\not= v$. 
If $G$ consists of a 4-cycle plus two
pendant edges attached to non-adjacent vertices then \eqref{b4} holds, so we
may assume without loss of generality that $z$ has degree 3. 

If $z=u$ remove $v,w,x,y$. Then $n_3'=n_3-3$, $n_2'=n_2-2$,
$n_1'=n_1+1$, $c'=1$ and $\nu(G')=\nu(G)-2$. Then by induction
$\nu(G)\geq\nu(G')+2\geq  7n_3/16+3n_2/8+3n_1/16-1/8-30/16+2$, which implies our result.

If $u$ has degree 1 we remove $u,v,w,x,y$. Then $n_3'=n_3-3$,
$n_2'=n_2-1$,
$n_1'=n_1-1$, $c'=1$ and $\nu(G')=\nu(G)-2$. Then by induction
$\nu(G)\geq\nu(G')+2\geq  7n_3/16+3n_2/8+3n_1/16-1/8-30/16+2$, as needed.

Otherwise $z\not= u$, and $d(z)=d(u)=3$. In this case we remove $v,w,x,y$. Then
$n_3'=n_3-4$, $n_2'=n_2$,
$n_1'=n_1$, $c'\leq 2$ and $\nu(G')=\nu(G)-2$. Then by induction
$\nu(G)\geq\nu(G')+2\geq  7n_3/16+3n_2/8+3n_1/16-1/8-30/16+2$, which
completes the proof of Theorem \ref{main}.

\section{$L\subseteq P$}\label{pmain2}

The fact that $L\subseteq P$ is an immediate consequence of Theorem~\ref{main2}, which we prove in this section.

Suppose that $(x_3,x_2,x_1)\in L$, so there is some real number $K$ such that
\begin{equation}\label{test}
\nu(G)\ge x_3n_3(G)+x_2n_2(G)+x_1n_1(G)-K
\end{equation}
for every connected subcubic graph $G$ (where $n_i(G)$ denotes the number of vertices of $G$ of degree $i$).  We fix a choice of $(x_3,x_2,x_1)$ and $K$ for the rest of this section.  

We will consider six special families of graphs: each family will show that $(x_3,x_2,x_1)$ 
must satisfy one of the inequalities in the definition of $P$. An example from each family is shown in the figures.

\medskip\noindent{\bf Example 1.}
Let $t$ be an odd positive integer. The graph $G_1(t)$ is the tree
with a root plus $t+1$ levels, indexed 
by $i=0,\ldots,t$, in which level $i$ contains $3\cdot 2^{i}$ vertices,
and all vertices except the leaves have degree 3.  Thus $G_1(t)$ is (internally) a cubic tree and has depth $t+1$.
Then $n_1=3\cdot 2^t$,
$n_2=0$ and $n_3=1+3(2^t-1)=3\cdot 2^t-2$. Since $G_1(t)$ is bipartite
with one partition class $S$ formed by the vertices at levels
$0,2,\ldots,t-1$ we see $\nu(G_1(t))\leq|S|=3(4^{(t+1)/2}-1)/3=2^{t+1}-1$.
By \eqref{test} we must have
$$(3\cdot 2^t-2)x_3+3\cdot 2^tx_1-K\leq 2\cdot 2^t-1,$$
and so, dividing by $3\cdot 2^t$ and letting $t\to\infty$, we see that
$$x_3+x_1\leq 2/3.$$

\begin{figure}

\begin{tikzpicture}
  [scale=.6,auto=left,
vx/.style={circle,draw,minimum size=4pt,inner sep=0pt},
capt/.style={rectangle,minimum size=4pt,inner sep=0pt}]

\node at (0,0) (base) {};
\node (n1) at ($(base)+(5,1)$) [vx] {};
\node (n2) at ($(base)+(2,3)$) [vx] {};
\node (n3) at ($(base)+(5,3)$) [vx] {};
\node (n4) at ($(base)+(8,3)$) [vx] {};
\node (n5) at ($(base)+(1,5)$) [vx] {};
\node (n6) at ($(base)+(3,5)$) [vx] {};
\node (n7) at ($(base)+(4,5)$) [vx] {};
\node (n8) at ($(base)+(6,5)$) [vx] {};
\node (n9) at ($(base)+(7,5)$) [vx] {};
\node (n10) at ($(base)+(9,5)$) [vx] {};

\node at ($(base)+(5,-1)$) [capt] {$G_1(1)$}; 

\foreach \from/\to in {n1/n2,n1/n3,n1/n4,n2/n5,n2/n6,n3/n7,n3/n8,n4/n9,n4/n10}
\draw (\from) -- (\to);

\node at (10,0) (base) {};
\node (n1) at ($(base)+(5,1)$) [vx] {};
\node (n2) at ($(base)+(2,3)$) [vx] {};
\node (n3) at ($(base)+(5,3)$) [vx] {};
\node (n4) at ($(base)+(8,3)$) [vx] {};
\node (n5) at ($(base)+(1,4.5)$) [vx] {};
\node (n6) at ($(base)+(2.5,4.5)$) [vx] {};
\node (n7) at ($(base)+(4.25,4.5)$) [vx] {};
\node (n8) at ($(base)+(5.75,4.5)$) [vx] {};
\node (n9) at ($(base)+(7.5,4.5)$) [vx] {};
\node (n10) at ($(base)+(9,4.5)$) [vx] {};

\node at ($(base)+(5,-1)$) [capt] {$G_2(1)$}; 

\foreach \from/\to in {n1/n2,n1/n3,n1/n4,n2/n5,n2/n6,n3/n7,n3/n8,n4/n9,n4/n10}
\draw (\from) -- (\to);

\foreach \x in {n5,n6,n7,n8,n9,n10}
{
\node at (\x) (newbase) [vx] {}; 
\node (q1) at ($(newbase)+(-0.5,0.5)$) [vx] {};
\node (q2) at ($(newbase)+(-0.5,1.5)$) [vx] {};
\node (q4) at ($(newbase)+(0.5,0.5)$) [vx] {};
\node (q3) at ($(newbase)+(0.5,1.5)$) [vx] {};
\foreach \from/\to in {newbase/q1,q1/q2,q2/q3,q3/q4,q4/newbase,q1/q3,q2/q4}
\draw (\from) -- (\to);
};

\end{tikzpicture}

\end{figure}

\medskip\noindent{\bf Example 2.}
Let $J$ denote the graph obtained by subdividing one edge of $K_4$,
and let $x$ denote the single vertex of degree 2 in $J$.
We define the graph $G_2(t)$, again for odd $t$, by identifying each
leaf in $G_1(t)$ with the vertex $x$ in a copy of the graph $J$, such
that all copies are disjoint from each other and the rest of the
graph. Then for this graph $n_1=n_2=0$, and
$n_3=3\cdot 2^t-2+15\cdot 2^t=9\cdot 2^{t+1}-2$. The same set $S$ as before now has the
property that removing it leaves
$1+3(2+2^3+\ldots+2^t)=1+6(4^{(t+1)/2}-1)/3=2^{t+2}-1$ odd
  components. Therefore any maximum matching in $G$ must leave exposed
  at least $2^{t+2}-1-|S|=2^{t+1}$ vertices. This tells us
  $\nu(G_2(t))\leq(9\cdot 2^{t+1}-2-2^{t+1})/2=(2^{t+4}-2)/2=2^{t+3}-1$.
So \eqref{test} implies that
$$(9\cdot 2^{t+1}-2)x_3-K\leq 2^{t+3}-1.$$
Dividing by $9\cdot 2^{t+1}$ and taking a limit gives
$$x_3\leq 4/9.$$

\medskip\noindent{\bf Example 3.}
Let $t\ge2$ be a positive integer.  The graph $G_3(t)$ is obtained from the
cycle with $2t$ vertices by attaching a pendant edge to every second
vertex. Then $n_1=n_2=n_3=t$. The graph is bipartite
with one vertex class consisting of the vertices of degree 3,
so $\nu(G_3(t))\leq n_3=t$.  Thus
$$x_3t+x_2t+x_1t-K\leq t.$$
Dividing by $t$ and taking a limit gives
$$x_3+x_2+x_1\leq1.$$

\medskip\noindent{\bf Example 4.}
The graph $G_4(t)$ is obtained from $G_3(t)$ by adding $t$ disjoint
copies of $J$, identifying the vertex $x$ in each copy with the leaf
of a pendant edge. Then $n_1=0$, $n_2=t$ and $n_3=6t$. The set of
degree-3 vertices on the cycle has size $t$ and leaves $2t$ odd
components when deleted, showing $\nu(G_4(t))\leq (7t-t)/2=3t$.
Thus
$$6x_3t+x_2t-K\leq 3t.$$
Dividing by $6t$ and taking a limit gives
$$x_3+x_2/6\leq 1/2.$$

\begin{figure}

\begin{tikzpicture}
  [scale=.6,auto=left,
vx/.style={circle,draw,minimum size=4pt,inner sep=0pt},
capt/.style={rectangle,minimum size=4pt,inner sep=0pt}]

\node at (0,0) (base) {};
\node (n1) at ($(base)$) [vx] {};
\node (n2) at ($(base)+(1,1)$) [vx] {};
\node (n3) at ($(base)+(0,2)$) [vx] {};
\node (n4) at ($(base)+(-1,1)$) [vx] {};
\node (n5) at ($(base)+(2,1)$) [vx] {};
\node (n6) at ($(base)+(-2,1)$) [vx] {};

\node at ($(base)+(0,-1)$) [capt] {$G_3(2)$}; 

\foreach \from/\to in {n1/n2,n2/n3,n3/n4,n4/n1,n2/n5,n4/n6}
\draw (\from) -- (\to);

\node at (7.75,0) (base) {};
\node (n1) at ($(base)$) [vx] {};
\node (n2) at ($(base)+(1,1)$) [vx] {};
\node (n3) at ($(base)+(0,2)$) [vx] {};
\node (n4) at ($(base)+(-1,1)$) [vx] {};
\node (n5) at ($(base)+(1.75,1)$) [vx] {};
\node (n6) at ($(base)+(-1.75,1)$) [vx] {};

\node at ($(base)+(0,-1)$) [capt] {$G_4(2)$}; 

\foreach \from/\to in {n1/n2,n2/n3,n3/n4,n4/n1,n2/n5,n4/n6}
\draw (\from) -- (\to);

\foreach \x/\y in {n5/1,n6/-1}
{
\node at (\x) (newbase) [vx] {}; 
\node (q1) at ($(newbase)+\y*(0.5,-0.5)$) [vx] {};
\node (q2) at ($(newbase)+\y*(1.5,-0.5)$) [vx] {};
\node (q4) at ($(newbase)+\y*(0.5,0.5)$) [vx] {};
\node (q3) at ($(newbase)+\y*(1.5,0.5)$) [vx] {};
\foreach \from/\to in {newbase/q1,q1/q2,q2/q3,q3/q4,q4/newbase,q1/q3,q2/q4}
\draw (\from) -- (\to);
};

\node at (13.5,0) (base) {};
\node (n1) at ($(base)$) [vx] {};
\node (n12) at ($(base)+(1,0)$) [vx] {};
\node (n2) at ($(base)+(2,0)$) [vx] {};
\node (n23) at ($(base)+(2,1)$) [vx] {};
\node (n3) at ($(base)+(2,2)$) [vx] {};
\node (n34) at ($(base)+(1,2)$) [vx] {};
\node (n4) at ($(base)+(0,2)$) [vx] {};
\node (n41) at ($(base)+(0,1)$) [vx] {};
\node (n5) at ($(base)+(0.75,1)$) [vx] {};
\node (n6) at ($(base)+(1.25,1)$) [vx] {};

\node at ($(base)+(1,-1)$) [capt] {$G_5(4)$}; 

\foreach \from/\to in {n1/n12,n12/n2,n2/n23,n23/n3,n3/n34,n34/n4,n4/n41,n41/n1,n1/n5,n5/n3,n2/n6,n4/n6}
\draw (\from) -- (\to);

\node at (18,0) (base) {};
\node (n1) at ($(base)$) [vx] {};
\node (n2) at ($(base)+(2,0)$) [vx] {};
\node (n3) at ($(base)+(1,1)$) [vx] {};

\node at ($(base)+(1,-1)$) [capt] {$G_6(3)$}; 

\foreach \from/\to in {n1/n2,n2/n3,n3/n1}
\draw (\from) -- (\to);

\end{tikzpicture}

\end{figure}

\medskip\noindent{\bf Example 5.} For each even integer $t\ge4$, let
$G_5(t)$ be obtained from a cubic graph $H$ on $t$ vertices by 
subdividing every edge of $H$ exactly once (for sake of definiteness, we may 
may take $H$ to be a cycle of length $t$ with opposite vertices joined).
Then $n_1=0$, $n_2=e(H)=3t/2$ and $n_3=t$. Then 
$G_5(t)$ is bipartite with one
vertex class $V(H)$ of size $t$, so $\nu(G)\leq t$.  
Thus
$$x_3t+3x_2t/2-K\leq t.$$
Dividing by $t$ and taking a limit gives
$$x_3+3x_2/2\leq 1.$$

\medskip\noindent{\bf Example 6.}  Finally, for odd integers $t\ge3$, we let $G_6(t)$
be the odd cycle of length $t$.  Then
$n_1=n_3=0$ and $n_2=t$, while $\nu=(t-1)/2$.
Thus
$$x_2t/2-K\leq t/2 -1/2.$$
Dividing by $t/2$ and taking a limit gives
$$x_2\leq 1/2.$$

\bigskip

The proof of Theorem \ref{main2} is now immediate.

\begin{proof}[Proof of Theorem \ref{main2}]
If $(x_3,x_2,x_1)\not\in P$ then it fails to satisfy one of the inequalities used to define $P$.
Therefore, taking the example above that corresponds to this inequality (and noting that all the examples are connected) we see that by taking $t$ large we can force $K$ to be arbitrarily large. \end{proof}

In fact it is easy to see that equality holds in each expression
bounding $\nu(G_i(t))$, but we do not need this fact. 
Finally, we note that Example 1 is sharp for \eqref{b2} and \eqref{b5}; Example 2 is sharp for \eqref{b5}; and Example 6 is sharp for \eqref{b1} and \eqref{b3}.

\bigskip
\noindent{\bf Acknowledgements.}  The authors would like to thank G\"unter Ziegler for recommending {\em polymake}, and also an anonymous referee for a very careful reading.

\end{document}